\documentclass
{amsart}

\usepackage{amsmath}
\usepackage{amsfonts}
\usepackage{amssymb}
\usepackage{amsmath, amsthm, amssymb, amsfonts}
\usepackage{verbatim}

\usepackage[mathlines, pagewise]{lineno}

\usepackage
{hyperref}
\hypersetup{
  colorlinks  = true,    
  urlcolor    = red,    
  linkcolor   = blue,    
  citecolor   = blue,      
  pdfauthor   = {Hector Alonzo Barriga Acosta},%
  pdfsubject  = {Protocolo},%
  pdftitle    = {Countable Nabla Products},  %
}

\newtheorem{prop}{Proposition}
\newtheorem{thr}[prop]{Theorem}

\newtheorem{lemma}[prop]{Lemma}
\newtheorem{claim}{\rm Claim}[prop]
\theoremstyle{definition}
\newtheorem{de}[prop]{Definition}
\newtheorem{exa}[prop]{Example}

\newtheorem{ques}[prop]{Question}

\newcommand{\om}{\omega}
\newcommand{\baire}{\omega^\omega}

\newcommand{\sub}{\subseteq}

\newcommand{\rest}{\upharpoonright}

\newcommand{\up}[1]{\uparrow\!{#1}}
\newcommand{\down}[1]{\downarrow\!{#1}}



\title{Box and Nabla Products that are $D$-Spaces}
\author[H. A. Barriga-Acosta]{H. A. Barriga-Acosta}
\address{Department of Mathematics,
University of North Carolina at Charlotte, 
Charlotte, NC 28223}
\email{hector.alonsus@gmail.com}

\author[P. M. Gartside]{P. M. Gartside}
\address{Department of Mathematics,
University of Pittsburgh, 
Pittsburgh, PA 15260}
\email{gartside@math.pitt.edu }
\date{\today}
\keywords{$D$-space, box product, nabla product, paracompact, $\kappa$-metrizable, scattered, uniformity, Model Hypothesis}

\subjclass{ 54A25, 54A35, 54B10, 54B15, 54C10, 54D20, 54D80, 54G10, 54G12, 03E17, 03E35, 03E65,  }

\begin{document}
\maketitle

\begin{abstract}
A space $X$ is $D$ if for every assignment, $U$, of an open neighborhood to each point $x$ in $X$ there is a closed discrete $D$ such that $\bigcup \{U(x) : x \in D\}=X$. 
The box product, $\square X^\om$, is $X^\om$ with topology generated by all $\prod_n U_n$,  where  every  $U_n$ is open. The nabla product, $\nabla X^\om$, is obtained from $\square X^\om$ by quotienting out mod-finite. 
The weight of $X$, $w(X)$, is the minimal size of a base, while $\mathfrak{d}=\mathop{cof} \om^\om$.

It is shown that there are specific compact spaces $X$ such that $\square X^\om$ and $\nabla X^\om$ are not $D$, but in general:

(1) $\square X^\om$ and $\nabla X^\om$ are hereditarily $D$ if $X$ is scattered and either hereditarily paracompact or of finite scattered height, or if $X$ is metrizable (and $w(X)\le \mathfrak{d}$ for $\square X^\om$);

(2) $\nabla X^\om$ is hereditarily $D$ if $X$ is first countable and $w(X)\le \omega_1$, or \emph{consistently} if $X$ is first countable and $|X|\le \mathfrak{c}$, or $w(X)\le \omega_1$; and

(3) $\square X^\om$ is $D$ \emph{consistently} if $X$ is compact and either first countable or $w(X)\le \omega_1$.
\end{abstract}

\section{Introduction}

We investigate property $D$ in box and nabla products. 
Recall that a \emph{neighbornet} or \emph{neighborhood assignment} on a space $X=(X,\tau)$ is a map $U : X \to \tau$ such that $x$ is in $U(x)$ for every $x$ in $X$. We automatically extend a given neighbornet, $U : X \to \tau$, over all subsets of $X$ by taking unions: $U(S)=\bigcup \{ U(x)  : x \in S \}$ for $S \subseteq X$. 
Then a space $X$ is a \emph{$D$-space} if for every neighbornet $U$ there is a closed discrete set $D$ in $X$ such that $U(D) = X$. 

The $D$-space property can be thought of as a covering property.
Indeed, every closed subspace of a $D$-space is \emph{irreducible}: every open cover has an open refinement such that no proper subcollection covers.
But it is unknown - and a famous open problem - whether paracompact or even hereditarily Lindelof spaces are $D$.

The box product problem, on the other hand, is concerned with when box or nabla products are paracompact. 
There are a number of ZFC counter-examples of box and nabla products which are not paracompact. There are various consistent positive results, in which some box and nabla products are paracompact.
But there are few positive results in ZFC, and few consistent counter-examples to complement the consistent theorems.

Thus it is valuable to determine which box or nabla products are $D$. Of particular interest will be those cases where the box or nabla product is known (consistently) to be paracompact. 
We will show that many box and nabla products are $D$-spaces, indeed quite often hereditarily $D$. This will include most cases where we know the box or nabla product is paracompact.
Moreover, these results largely hold in ZFC.

Given spaces $X_n$, for $n$ in $\mathbb{N}$, the \emph{box product}, $\square_n X_n$, is $\prod_n X_n$ with topology generated by open boxes, $\prod_n U_n$,  where for every  $n$ the set  $U_n$ is open.
We write $\square X^\omega$ for $\square_n X_n$ where $X_n=X$ for all $n$. 
Box products of non-compact spaces are typically not paracompact. 
Indeed box products of `large' compacta are also not paracompact. In Section~\ref{sec:cpta} we give two examples of compact spaces, $X$, whose box product, $\square X^\om$, is not $D$.  

In order to show that some box products of compact spaces are paracompact, Kunen \cite{kunen1985box} introduced nabla products. 
Let $=^*$ be  mod finite equivalence on $\prod_n X_n$.
Then the \emph{nabla product}, $\nabla_n X_n$, is the quotient space $\left(\square_n X_n\right)/=^*$. Denote by $q$ the quotient map from $\square_n X_n$ to $\nabla_n X_n$. 
We write $\nabla X^\omega$ for $\nabla_n X_n$ where for every $n$, $X_n=X$. 
Kunen showed that, provided the spaces $X_n$ are all compact, the box product $\square_n X_n$ is paracompact if and only if the nabla product $\nabla_n X_n$ is paracompact. 
Almost every positive result on the paracompactness of a box product,  $\square_n X_n$, requires the $X_n$'s to be compact and is deduced from Kunen's result by showing that $\nabla_n X_n$ is paracompact.  

For example, it is consistent (in various models) that if $X$ is compact and either first countable or of weight no more than $\omega_1$, then $\square X^\om$ is paracompact (because $\nabla X^\om$ is paracompact). 
Here we show, in Section~\ref{ssec:small}, that, in the same models, if $X$ is first countable and $|X|\le \mathfrak{c}$, or if it has weight no more than $\omega_1$ then $\nabla X^\om$ is hereditarily $D$; and we deduce that if $X$ is compact and either first countable or of weight no more than $\omega_1$ then $\square X^\om$ is $D$. 
Moreover, in Section~\ref{ssec:metric}, we go on to show - in ZFC - that $\nabla X^\om$ is hereditarily $D$ for every metrizable $X$, and $\square X^\om$ is hereditarily $D$ when $X$ is metrizable and has weight no more than the \emph{dominant} number, $\mathfrak{d}$, the minimal size of a cofinal famly in $\om^\om$ with the product order. 
Further, $\nabla X^\om$ is hereditarily $D$ if $X$ is first countable and has weight no more than $\omega_1$. 

Kunen himself applied nabla products to show that, under the Continuum Hypothesis, (CH), for every compact scattered space $X$, the box product, $\square X^\om$, is paracompact. 
What can be said about the $D$-property of box or nabla products of compact scattered spaces, is an intriguing open problem.
\begin{ques}
If $X$ is compact and scattered then is $\square X^\om$ a $D$-space (a) in ZFC? or (b) at least under (CH)?
\end{ques}
The authors prove in Sections~\ref{sec:hsct} and~\ref{sec:fsct} that if $X$ is scattered and either is hereditarily paracompact or has finite scattered height then $\square X^\om$ and $\nabla X^\om$ are hereditarily $D$. 
An example is given of scattered spaces, $X_n$, where each $X_n$  has scattered height $n$, such that $\nabla_n X_n$ is not $D$. 

We start the paper, in Section~\ref{sec:pre}, by establishing a $D$-space analogue of Kunen's key result connecting box and nabla products, followed by some tools for showing that spaces are $D$, and a machine to create special non-$D$ spaces.

\section{Preliminaries}\label{sec:pre}

If $U: X \to \tau$ is a neighbornet and $N \subseteq X$, we use $U[N]$ to denote the collection $\{ U(x) : x\in N\}$ and $U(N)$ for $\bigcup_{x\in N} U(x)$. If $U$ and $V$ are both neighbornets then we say $U$ is \emph{below} $V$ if  $U(x) \subseteq V(x)$ for every $x$. 

\subsection{Relating the \texorpdfstring{$D$}{D}-property in Box and Nabla Products}

First let us show that for \emph{compact} spaces, $X_n$, the box product, $\square_n X_n$, is a $D$-space if and only if the corresponding nabla product, $\nabla_n X_n$, is $D$. 

\begin{thr}\label{th:boxD_iff_nablaD}
Suppose $X_n$ is compact, $n\in \om$. Then, $\square_n X_n$ is a $D$-space if and only if $\nabla_n X_n$ is a $D$-space.
\end{thr}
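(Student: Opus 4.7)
The plan is to relate neighbornets across the quotient map $q \colon \square_n X_n \to \nabla_n X_n$, exploiting two structural features available when each $X_n$ is compact: each $=^*$-class $[\xbar]$ is closed and $\sigma$-compact in $\square_n X_n$ (a countable union of compact pieces $\prod_{n \in F} X_n \times \{x_n\}_{n \notin F}$, $F \in \fin$) and hence a $D$-space in its own right; and the saturation of any open box $\prod_n U_n$ is $\bigcup_{F \in \fin} \prod_n U_n^F$ with $U_n^F = U_n$ off $F$ and $U_n^F = X_n$ on $F$, so saturations of open boxes are open in $\square_n X_n$.

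For the forward direction, given a neighbornet $V$ on $\nabla_n X_n$, I would pull back to the \emph{saturated} neighbornet $U(\xbar) := q^{-1}(V(q(\xbar)))$ on $\square_n X_n$, and apply the hypothesis to obtain a closed discrete $D_0$ with $U(D_0) = \square_n X_n$. Next, thin $D_0$ so it meets each $=^*$-class at most once: the thinned set is still closed discrete (subsets of closed discrete sets are closed discrete), and saturation of $U$ preserves the covering property. Setting $E := q(D_0)$ then gives $V(E) = q(U(D_0)) = \nabla_n X_n$, and it remains to show $E$ is closed discrete in $\nabla_n X_n$ — for this, each $\xbar \in D_0$ should have a box neighborhood in $\square_n X_n$ whose saturation is a $\nabla$-open set meeting $E$ only at $q(\xbar)$, using the at-most-one-representative thinning and Hausdorff separation on the infinitely many coordinates where distinct $=^*$-classes differ.

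For the reverse direction, given a neighbornet $U$ on $\square_n X_n$, use $D$-ness of each class $[\xbar]$ to pick a countable $C_{[\xbar]} \subseteq [\xbar]$, closed discrete in $\square_n X_n$, with $U(C_{[\xbar]}) \supseteq [\xbar]$. The saturated assignment $U^*(\xbar) := [U(C_{[\xbar]})]$ is an open-valued neighbornet on $\square_n X_n$ by the saturation observation, and factors through $q$ as a neighbornet $\tilde{V}$ on $\nabla_n X_n$. Apply the nabla hypothesis to obtain closed discrete $\tilde{E} \subseteq \nabla_n X_n$ with $\tilde{V}(\tilde{E}) = \nabla_n X_n$, and let $D := \bigcup\{C_{[\xbar]} : [\xbar] \in \tilde{E}\}$.

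The main obstacle I anticipate lies in the reverse direction: the condition $\tilde{V}(\tilde{E}) = \nabla_n X_n$ translates only to ``for each $\ybar \in \square_n X_n$ there is a mod-finite modification $\ybar' =^* \ybar$ lying in some $U(C_{[\xbar]})$ with $[\xbar] \in \tilde{E}$'', rather than $\ybar$ itself lying in $U(D)$. I expect to close this gap by either refining the $C_{[\xbar]}$'s (so each $C_{[\xbar]}$ absorbs finite-coordinate shifts of its centers and the covering is ``saturation-stable'') or by sharpening $\tilde{V}$ to record only ``fully covered'' classes (i.e.\ $[\ybar] \in \tilde{V}([\xbar])$ exactly when $[\ybar] \subseteq U(C_{[\xbar]})$, which may need $\sigma$-compactness of $[\ybar]$ once more). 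The residual check that $D$ is closed discrete in $\square_n X_n$ should then follow from closedness of $=^*$-classes and their separation by box neighborhoods.
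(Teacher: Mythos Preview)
Your outline converges to the paper's argument, but you are circling the one structural fact that makes everything clean and never quite naming it: when each $X_n$ is compact, the quotient map $q\colon \square_n X_n \to \nabla_n X_n$ is \emph{closed} (Kunen). Once you have this, the forward direction is a one-liner (closed images of $D$-spaces are $D$), and your ``sharpening'' fix for the reverse direction works immediately: defining $\tilde V([\xbar]) = \{[\ybar] : q^{-1}([\ybar]) \subseteq U(C_{[\xbar]})\}$ gives an open set because it equals $\nabla_n X_n \setminus q\bigl[\square_n X_n \setminus U(C_{[\xbar]})\bigr]$, and $q$ is closed. Your suggestion that ``$\sigma$-compactness of $[\ybar]$'' might be what is needed here is off target; $\sigma$-compactness of fibers is used only to see that each fiber is $D$, not to make $\tilde V$ open-valued. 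Your other proposed fix (enlarging $C_{[\xbar]}$ to absorb finite-coordinate shifts) cannot work: such a set would have to contain all of $[\xbar]$, which is not closed discrete.

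The paper abstracts all of this into a general lemma: if $q\colon X \to Y$ is a closed surjection, $Y$ is $D$, and every fiber $q^{-1}(y)$ is a $D$-subspace of $X$, then $X$ is $D$. Your hands-on arguments in both directions are essentially special cases of this lemma and its converse (closed images preserve $D$), so the approach is the same --- but your verification that $E = q(D_0)$ is closed discrete in the forward direction, and that $D = \bigcup C_{[\xbar]}$ is closed discrete in the reverse direction, both implicitly require separating a fiber from a disjoint closed set by a saturated open set, which is exactly closedness of $q$. Make that explicit and the proof goes through.
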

This follows from some general facts about closed maps and the $D$-space property. 
Since the quotient mapping, $q$, from $\square_n X_n$ to $\nabla_n X_n$ is closed, and closed images of $D$-spaces are $D$ (\cite{borges1991study}), the forward implication is clear. 
Also, notice that the fibers $q^{-1}(y)$, $y\in Y$, are $\sigma$-compact, hence $D$-subspaces of $\square_n X_n$ and so we can apply the next lemma to derive the converse.

\begin{lemma}\label{Lemma_D_quotient}
Let $q :X \to Y$ be a closed surjective mapping.
If $Y$ is a $D$-space such that $q^{-1} (y)$ is a $D$-subspace of $X$, for every $y\in Y$, then $X$ is a $D$-space.
\end{lemma}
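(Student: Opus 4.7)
The plan is to take an arbitrary neighbornet $U$ on $X$, solve the $D$-problem on each fiber $q^{-1}(y)$, transfer the data to $Y$ via the closed map $q$, then apply the $D$-property of $Y$ and reassemble. The key interpretive point is that ``$D$-subspace of $X$'' must mean the Arhangelskii/Borges--Wehrly version: for every $U$ assigning open-in-$X$ neighborhoods to points of $q^{-1}(y)$, there exists $D_y \subseteq q^{-1}(y)$ that is closed discrete \emph{in $X$} (not merely in the fiber) with $q^{-1}(y) \subseteq U(D_y)$. Without that strengthening, closedness of the eventual $D$ in $X$ would be out of reach.

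Concretely, given a neighbornet $U:X\to\tau_X$, first I would, for each $y\in Y$, apply the $D$-subspace property of $q^{-1}(y)$ to the restriction of $U$, obtaining $D_y\subseteq q^{-1}(y)$, closed discrete in $X$, with $q^{-1}(y)\subseteq U(D_y)$. Next, define
\[
V(y) \;=\; Y\setminus q\bigl(X\setminus U(D_y)\bigr).
\]
Since $U(D_y)$ is open in $X$ and $q$ is closed, $V(y)$ is open in $Y$; and because $q^{-1}(y)\subseteq U(D_y)$, one checks $y\in V(y)$, so $V$ is a neighbornet on $Y$. Using that $Y$ is $D$, pick a closed discrete $E\subseteq Y$ with $V(E)=Y$, and set $D=\bigcup_{y\in E} D_y$.

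It then remains to verify the two properties of $D$. For $U(D)=X$: given $x\in X$, choose $y'\in E$ with $q(x)\in V(y')$; unwinding the definition of $V(y')$ forces $x\in U(D_{y'})\subseteq U(D)$. For closed-discreteness of $D$ in $X$: given $x\in X$, use closed-discreteness of $E$ in $Y$ to pick an open $W\ni q(x)$ with $|W\cap E|\le 1$. If $W\cap E=\emptyset$, then $q^{-1}(W)$ is a neighborhood of $x$ disjoint from $D$. Otherwise, say $W\cap E=\{y_0\}$; then $q^{-1}(W)\cap D\subseteq D_{y_0}$, and since $D_{y_0}$ is closed discrete \emph{in $X$}, a further refinement of $q^{-1}(W)$ around $x$ yields a neighborhood meeting $D$ in at most the single point $x$ (when $x\in D_{y_0}$) or in nothing.

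The main obstacle is exactly the last step: one absolutely needs the $D_y$ to be closed discrete in the ambient space $X$, not merely in the fiber, for the refinement argument in $q^{-1}(W)$ to produce a neighborhood of $x$ meeting $D$ in at most one point. Everything else is a routine combination of the closedness of $q$ (to push forward the partial covers to a neighbornet on $Y$) with the $D$-property on $Y$ (to pick the indexing set $E$).
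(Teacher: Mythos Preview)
Your proof is correct and follows essentially the same route as the paper's: find $D_y$ on each fiber, push forward to a neighbornet $V$ on $Y$ via the closed map, apply the $D$-property of $Y$ to get $E$, and set $D=\bigcup_{y\in E}D_y$. Your closed-discreteness argument (refining inside $q^{-1}(W)$ using $D_{y_0}$) is in fact slightly cleaner than the paper's; note also that your interpretive worry is moot in context, since the fibers $q^{-1}(y)$ are closed in $X$, so ``closed discrete in the fiber'' and ``closed discrete in $X$'' coincide.
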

\begin{proof}
Let $U: X \to \tau_X$ be a neighbornet. 
Every fiber $q^{-1}\{y \}$ is a $D$-space; there is a closed discrete set $D_y$ in $q^{-1}(y)$ (hence, in $X$) such that $q^{-1}(y) \subseteq U(D_y)$. 
For $y\in Y$, define $V: Y \to \tau_Y$ as $V(y) = \{ w\in Y : q^{-1}(w)  \subseteq U(D_y)\}$. 
Observe that $V(y)$ coincides with the set $Y \setminus q[X \setminus U(D_y)]$ which is open since $q$ is closed.
Since $Y$ is a $D$-space, there is a closed discrete set $D'$ in $Y$ such that $Y = V(D')$. 
Set $D = \bigcup_{y\in D'} D_y$. 

\medskip

\noindent {\bf Claim:} $D$ is closed discrete in $X$ and $X = U(D)$.

Fix $x\in X$. There is $y\in D'$ such that $q(x) \in V(y)$. 
Thus, $x \in q^{-1}(q(x)) \subseteq U(D_y) \subseteq U(D)$. This proves $X = U(D)$.

Now, in order to prove $D$ is closed discrete we will see that for every $x\in X$ there is a neighborhood $U_x$ around $x$ such that $|D\cap U_x| \leq 1$. 
Let's note that there is $y\in D'$ and open set $V_x$ around $q(x)$ such that $q(x) \in V_x \subseteq V(y)$ and $|V_x \cap D'| \leq 1$. On the other hand, since $D_{q(x)}$ is closed discrete in $X$, there is a neighborhood $V'_x$ around $x$ such that $|V'_x \cap D_{q(x)} |\leq 1$. Set $U_x = V'_x \cap q^{-1}[V_x]$. Observe that $|D \cap U_x|\leq 1$. The claim is proved, and therefore $X$ is a $D$-space.
\end{proof}

Note that this result on preservation of $D$ under inverse images of closed maps is optimal:
closed subsets of $D$-spaces are $D$, so the fibers must be $D$; and if $X$ is any space which is not $D$ then the trivial quotient map to the one point space is closed and onto a $D$-space.

Below we show that many nabla products are not just $D$ but \emph{hereditarily} $D$. 
However the natural `hereditary $D$' versions of Theorem~\ref{th:boxD_iff_nablaD} and Lemma~\ref{Lemma_D_quotient} do not hold. 
Further the restriction in Theorem~\ref{th:boxD_iff_nablaD} to compact spaces is necessary. 
Indeed,  $\square (\omega_1)^\omega$ is not $D$ (because $\omega_1$ is not $D$ and embeds as a closed subspace in $\square (\omega_1)^\omega$), so $\square (\omega_1+1)^\omega$ is not hereditarily $D$, while $\nabla (\omega_1+1)^\omega$ is hereditarily $D$ (see Example~\ref{ex:om1+1}).

\subsection{Showing a Space is \texorpdfstring{$D$}{D}}
Now we turn to the problem of showing that a space is $D$. 
To do so, we need to build suitable closed discrete sets, and here we outline some machinery of \cite{fleissner2001d} which simplifies that process, and then apply that to spaces whose topologies come from partial orders.
\medskip

Let $U$ be a neighbornet on $X$ and $D \subseteq X$. We say $D$ is {\it $U$-sticky} if $D$ is closed discrete and $\forall x \in X  ( U(x) \cap D \neq \emptyset \to x \in U(D))$.
For a neighbornet $U$ on $X$, let $D(U) = \{D \subseteq  X:D \; \; \text{is } U \text{-sticky} \}$. For each $D,D' \in  D(U)$ we say $D \leq_U D'$ if and only if $(1)$ $D \subseteq D'$, and $(2)$ $(D'\setminus D) \cap U(D) = \emptyset$.

\begin{thr}[\cite{fleissner2001d}]\label{Thr_Equivalence_D}
The following are equivalent for a topological spaces $X$.
\begin{enumerate}
    \item For every neighbornet $U$ of $X$, for every $D \in D(U)$ and for every $x \in X$ there exists $D' \in D(U)$ such that $x \in U(D')$ and $D\leq_U D'$.
    \item $X$ is a $D$-space.
\end{enumerate}{}
\end{thr}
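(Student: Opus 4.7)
The plan is to prove $(1)\Leftrightarrow(2)$, handling each direction separately.

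For $(2)\Rightarrow(1)$, given a neighbornet $U$, a $U$-sticky $D$, and a point $x$, I take $D'=D$ if $x\in U(D)$; otherwise the closed set $X':=X\setminus U(D)$ contains $x$ and, as a closed subspace of the $D$-space $X$, is itself a $D$-space. Applying the $D$-property on $X'$ to the restricted neighbornet $y\mapsto U(y)\cap X'$ produces a closed discrete $E\subseteq X'$ with $X'\subseteq U(E)$. Set $D':=D\cup E$. Then $x\in U(E)\subseteq U(D')$, and $E\cap U(D)=\emptyset$ yields $D\leq_U D'$. For closed discreteness of $D'$, the $U$-stickiness of $D$ gives by contrapositive $U(e)\cap D=\emptyset$ for every $e\in E$, since $e\notin U(D)$. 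For $U$-stickiness of $D'$, pick $y$ with $U(y)\cap D'\neq\emptyset$: if $U(y)$ meets $D$ apply stickiness of $D$; otherwise $U(y)$ meets $E$, and either $y\in U(D)\subseteq U(D')$, or else $y\in X'$, in which case $y\in U(E)\subseteq U(D')$.

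For $(1)\Rightarrow(2)$, fix a neighbornet $U$ and build a transfinite $\leq_U$-chain in $D(U)$. Start with $D_0:=\emptyset$. At a successor $\alpha+1$, if $U(D_\alpha)=X$ stop with witness $E:=D_\alpha$; otherwise choose $x_\alpha\notin U(D_\alpha)$ and invoke (1) to obtain $D_{\alpha+1}\in D(U)$ with $D_\alpha\leq_U D_{\alpha+1}$ and $x_\alpha\in U(D_{\alpha+1})$. At a limit $\lambda$, put $D_\lambda:=\bigcup_{\alpha<\lambda}D_\alpha$. The process must terminate at some ordinal of cardinality at most $|X|^+$, since every non-halting successor step adds a new point of $D_{\alpha+1}\setminus D_\alpha$ (else $x_\alpha\in U(D_\alpha)$), so $|D_\alpha|$ strictly grows.

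The main obstacle, and the place where the precise definition of $\leq_U$ earns its keep, is verifying that the limit stage $D_\lambda$ lies in $D(U)$, i.e., is $U$-sticky and (in particular) closed discrete. A routine transfinite induction gives $D_\alpha\leq_U D_\beta$ whenever $\alpha\leq\beta\leq\lambda$, so in particular $(D_\beta\setminus D_\alpha)\cap U(D_\alpha)=\emptyset$. For any $y\in X$, split on whether $y\in U(D_\lambda)$. If so, let $\alpha$ be least with $y\in U(d)$ for some $d\in D_\alpha$, choosing $d=y$ when $y\in D_\alpha$; then $U(d)\cap D_\lambda\subseteq D_\alpha$, and since every subset of the closed discrete set $D_\alpha$ is closed in $X$, the open set $V:=U(d)\setminus(D_\alpha\setminus\{d\})$ is a neighborhood of $y$ meeting $D_\lambda$ exactly in $\{d\}$. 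If instead $y\notin U(D_\lambda)$, then the $U$-stickiness of every $D_\alpha$ forces $U(y)\cap D_\alpha=\emptyset$, so $U(y)$ itself is a neighborhood of $y$ disjoint from $D_\lambda$. Hence $D_\lambda$ is closed discrete, and the same case analysis yields its $U$-stickiness. Thus the recursion halts with a $U$-sticky $E$ satisfying $U(E)=X$, so $X$ is a $D$-space.
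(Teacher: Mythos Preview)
Your proof is correct and follows the standard argument for this result. The paper itself does not prove Theorem~\ref{Thr_Equivalence_D}; it is quoted from \cite{fleissner2001d} without proof, so there is nothing to compare your argument against in the paper. One small imprecision: in the termination argument you write ``$|D_\alpha|$ strictly grows,'' but what you actually establish (and what is needed) is that the \emph{sets} $D_\alpha$ strictly increase, which bounds the length of the chain below $|X|^+$; the cardinalities need not strictly increase.
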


In both the definition and the above equivalence we only need to deal with neighbornets below some fixed neighbornet, or indeed, just any  family of neighbornets cofinal in the `below' order. 
Given a subspace $A$ of $X$ and a neighbornet $U$ on $X$ then there is a natural induced neighbornet $U_A$ on $A$ given by $U_A(x)=U(x) \cap A$.


\begin{de} Let $T$ be a neighbornet of a space $X$. 
A subset $S$ of  $X$ is \emph{super-sticky below $T$} if $S$ is closed discrete and for every  neighbornet $U$ on $X$ below $T$, if $U(x) \cap S \ne \emptyset$ then $x \in S$.
\end{de}
We frequently omit the `below $T$' in `super-sticky below $T$' when it is clear from context.

\begin{lemma}\label{l:ss_sub}
Let $A$ be any subspace of $X$. If $S$ is super-sticky below $T$ in $X$ then $S_A=S \cap A$ is super-sticky below $T_A$ in $A$
\end{lemma}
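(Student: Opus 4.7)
The plan is to verify directly the two conditions in the definition of super-stickiness for $S_A = S \cap A$ in $A$, relative to the induced neighbornet $T_A$. Closed discreteness of $S_A$ in $A$ is immediate, since the intersection of a closed discrete subset of $X$ with the subspace $A$ is closed discrete in $A$.

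For the main condition, suppose $U$ is a neighbornet on $A$ below $T_A$ and $x \in A$ satisfies $U(x) \cap S_A \neq \emptyset$. The idea is to lift $U$ to a neighbornet $\tilde U$ on all of $X$ that lies below $T$ and whose restriction to $A$ contains $U$, so that super-stickiness of $S$ below $T$ in $X$ can be invoked at $x$.

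Concretely, for each $y \in A$ pick an open set $O_y$ in $X$ with $O_y \cap A = U(y)$ (possible since $U(y)$ is relatively open in $A$), and define
\[
\tilde U(y) = O_y \cap T(y) \quad \text{for } y \in A, \qquad \tilde U(y) = T(y) \quad \text{for } y \in X \setminus A.
\]
Since $U(y) \subseteq T_A(y) = T(y) \cap A$ for $y \in A$, we have $y \in \tilde U(y) \subseteq T(y)$, so $\tilde U$ is a neighbornet on $X$ below $T$. Moreover $U(y) = O_y \cap A \cap T(y) \subseteq \tilde U(y)$ for every $y \in A$. Hence
\[
\emptyset \neq U(x) \cap S_A \subseteq \tilde U(x) \cap S,
\]
and super-stickiness of $S$ below $T$ in $X$ forces $x \in S$. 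Combined with $x \in A$, this yields $x \in S_A$, as required.

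The only real subtlety is the extension step: one must be careful to intersect with $T(y)$ so that $\tilde U$ stays below $T$, while still ensuring that $\tilde U(y)$ agrees with $U(y)$ (or at least contains it) on $A$. Once that is done the rest is a direct application of the hypothesis on $S$.
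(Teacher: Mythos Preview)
Your proof is correct and follows essentially the same approach as the paper: extend the given neighbornet on $A$ to a neighbornet on $X$ that stays below $T$, then apply super-stickiness of $S$ in $X$. The paper's version is terser (it simply asserts the existence of such an extension $U$ with $U(x)\cap A = U'(x)$), while you spell out the construction explicitly; in fact your $\tilde U$ satisfies $\tilde U(y)\cap A = U(y)$ exactly, not just the containment you use.
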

\begin{proof}
Certainly $S_A$ is closed, discrete in $A$. Take any $U'$ a neighbornet on $A$ below $T_A$. Then there is a neighbornet, $U$, on $X$ below $T$, such that $U(x) \cap A = U'(x)$ for all $x$ in $A$ (so $U'=U_A$). Now if $x$ is in $A$ and $U'(x)$ meets $S_A$ then $U(x)$ meets $S$, so $x \in S$. That is, $x \in S_A$. 
\end{proof}

Here is a convenient way of finding things $\leq_U$-above a given $D$.
\begin{lemma}\label{l:ss_grow}
If $U$ is a neighbornet below some $T$, $D$ is  $U$-sticky and $E$ super-sticky below $T$ then $D'=D \cup (E \setminus U(D))$ is $U$-sticky, and hence $D \leq_U D'$. 
\end{lemma}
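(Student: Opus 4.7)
The plan is to verify the three conditions that encode ``$D'$ is $U$-sticky and $D \leq_U D'$'': namely that $D'$ is closed and discrete; that $U(x) \cap D' \ne \emptyset$ implies $x \in U(D')$; and that $D \subseteq D'$ with $(D' \setminus D) \cap U(D) = \emptyset$. The last condition is immediate from the formula $D' = D \cup (E \setminus U(D))$, since $D' \setminus D$ is contained in $E \setminus U(D)$ and is therefore disjoint from $U(D)$.

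For closed-discreteness, I would first note that $E \setminus U(D) = E \cap (X \setminus U(D))$ is closed-discrete in $X$ as a closed subset of the closed-discrete set $E$. Closedness of the union $D \cup (E \setminus U(D))$ can be checked by taking $x \notin D'$ and splitting on whether $x$ lies in $U(D)$: if $x \in U(D) \setminus D$, any neighborhood of $x$ inside the open set $U(D)$ that avoids the closed set $D$ automatically avoids $E \setminus U(D)$; if $x$ lies outside $U(D)$ and outside $E$, then $X \setminus (D \cup E)$ is an open neighborhood of $x$ disjoint from $D'$. Discreteness at a point $x \in D$ follows by intersecting a $D$-isolating neighborhood with the open set $U(D)$ (which contains $x$ and kills the $E \setminus U(D)$ side), and at a point $x \in E \setminus U(D)$ by intersecting an $E$-isolating neighborhood with $X \setminus D$.

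The only step where the hypotheses do real work is the absorption condition. If $U(x)$ meets $D$, the $U$-stickiness of $D$ yields $x \in U(D) \subseteq U(D')$ at once. Otherwise $U(x)$ meets $E \setminus U(D)$, hence meets $E$; because $U$ lies below $T$ and $E$ is super-sticky below $T$, this forces $x \in E$. Then either $x \in U(D) \subseteq U(D')$, or $x \in E \setminus U(D) \subseteq D'$, so that $x \in U(x) \subseteq U(D')$.

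There is no serious obstacle — every step is essentially a definition-unwinding. The one subtle point to watch is that super-stickiness is being applied to the given neighbornet $U$ itself, which is permitted precisely because $U$ is assumed below $T$; this uniformity over \emph{all} neighbornets below $T$ is exactly what the notion of ``super-sticky below $T$'' is designed to supply, and is what distinguishes the conclusion from the weaker statement one would obtain if $E$ were merely known to be $U$-sticky.
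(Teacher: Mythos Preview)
Your proof is correct and follows essentially the same approach as the paper's: the paper dismisses closed-discreteness with a single ``Clearly'', while you spell it out carefully, and your verification of the absorption condition matches the paper's almost verbatim (the paper organizes the case split as ``assume $x\notin U(D)$'' up front rather than revisiting it at the end, but the logic is identical). Your closing remark about why mere $U$-stickiness of $E$ would not suffice is a nice observation that the paper does not make explicit.
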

\begin{proof}
Clearly $D'$ is closed discrete, $D \subseteq D'$, and $(D'\setminus D) \cap U(D) = \emptyset$. So it suffices to check that $D'$ is $U$-sticky.
Take any $x$ such that $U(x)$ meets $D'$. Since $U(D') \supseteq U(D)$, if $x \in U(D)$ then we are done -- and this happens if $U(x)$ meets $D$ (as $D$ is $U$-sticky). 
So we can suppose $x \notin U(D)$ and $U(x)$ meets $E$. Then, as $E$ is super-sticky, $x \in E$. But $x$ is not in $U(D)$, so $x$ is in $U(E \setminus U(D)) \subseteq U(D')$, as required.
\end{proof}

Combining Theorem~\ref{Thr_Equivalence_D}, Lemmas~\ref{l:ss_sub} and~\ref{l:ss_grow} we deduce:
\begin{prop}\label{pr:ss_cover_implies_D}
Every space which has a cover by super-sticky sets all below a fixed neighbornet is hereditarily D.
\hfill $\square$
\end{prop}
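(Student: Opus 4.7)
The plan is to reduce the hereditary-$D$ claim to showing that $X$ itself is $D$, and then to verify the $D$-property via Theorem~\ref{Thr_Equivalence_D} by using a super-sticky set from the given cover to enlarge any $U$-sticky set so as to catch a prescribed point. For the hereditary reduction I would apply Lemma~\ref{l:ss_sub}: if $\{S_\alpha\}$ covers $X$ with each $S_\alpha$ super-sticky below a fixed neighbornet $T$, then for any subspace $A$ the intersections $\{S_\alpha \cap A\}$ cover $A$ and each is super-sticky below $T_A$ in $A$. So the hypothesis passes to subspaces, and it suffices to prove that the bare existence of such a cover implies $X$ is a $D$-space.

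To show $X$ is $D$, I would invoke Theorem~\ref{Thr_Equivalence_D}, using the remark immediately following it that one may restrict attention to neighbornets below a fixed neighbornet (the neighbornets below $T$ form a cofinal family in the below order). Fix a neighbornet $U$ below $T$, a $U$-sticky $D$, and a point $x \in X$; the task is to produce $D' \in D(U)$ with $D \leq_U D'$ and $x \in U(D')$. By hypothesis there is a super-sticky $S$ in the cover containing $x$. Lemma~\ref{l:ss_grow} then yields $D' := D \cup (S \setminus U(D))$, which is $U$-sticky and satisfies $D \leq_U D'$ automatically.

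It remains only to verify $x \in U(D')$, which is a two-case check: if $x \in U(D)$, then $D \subseteq D'$ gives $x \in U(D')$; otherwise $x \in S \setminus U(D) \subseteq D'$, so $x \in U(x) \subseteq U(D')$. Since all of the technical content has been packaged into Lemmas~\ref{l:ss_sub} and~\ref{l:ss_grow} and Theorem~\ref{Thr_Equivalence_D}, there is no genuine obstacle here — the proof is a short assembly of these facts. The only delicate point worth flagging is the invocation of the cofinality remark after Theorem~\ref{Thr_Equivalence_D}, which lets us assume $U$ lies below $T$; this is precisely the hypothesis required by Lemma~\ref{l:ss_grow} for the super-stickiness of $S$ to take effect.
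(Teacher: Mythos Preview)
Your proposal is correct and is precisely the assembly the paper has in mind: it explicitly states that the proposition follows by combining Theorem~\ref{Thr_Equivalence_D} with Lemmas~\ref{l:ss_sub} and~\ref{l:ss_grow}, and your argument spells out exactly that combination, including the use of the cofinality remark to restrict to neighbornets below $T$.
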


Finally we look at spaces with a partial order.

\begin{de}\label{def:a}
Let $X$ be a space.  We say that a  partial order, $\preceq$, on $X$ is \emph{topological} if for every $x$ in $X$ the down-set, $\down{x}=\{ y : y \preceq x\}$, is a neighborhood of $x$.
\end{de}
Observe that a partial order $\preceq$ is topological if and only if $T(x)=\left(\down{x}\right)^\circ$ (the interior of $\down{x}$) is a neighbornet. Also note that if $\preceq$ is topological on $X$ and $A$ is a subspace then $\preceq$ restricted to $A$ is topological on $A$.

\begin{lemma}\label{l:upsets}
Let $X$ be a space with topological partial order, $\preceq$. Then, for every $x$ in $X$, the  up-set, $\up{x} = \{y : x \preceq y\}$ is closed and super-sticky below $T(x)=\left(\down{x}\right)^\circ$. 
\end{lemma}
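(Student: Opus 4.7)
The plan is to verify the two claims — that $\up{x}$ is closed and that it is super-sticky below the neighbornet $T$ given by $T(z) = (\down{z})^\circ$ — using essentially the same one-line observation in both cases: since $T(z) \subseteq \down{z}$ by definition of the interior, any $y \in T(z)$ satisfies $y \preceq z$, and then transitivity of $\preceq$ pushes order information from $\up{x}$ down onto $z$ itself.

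For closedness, I would take an arbitrary $z$ with $x \not\preceq z$ and exhibit the open neighborhood $T(z)$ as disjoint from $\up{x}$: if some $y \in T(z) \cap \up{x}$ existed, then $y \preceq z$ (from $T(z) \subseteq \down{z}$) and $x \preceq y$ (from $y \in \up{x}$) would combine to give $x \preceq z$, contradicting the choice of $z$. Hence $X \setminus \up{x}$ is open.

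For the sticky half of super-stickiness, the argument runs exactly parallel. Given any neighbornet $U$ below $T$ and a point $z$ with $U(z) \cap \up{x} \ne \emptyset$, I would pick $y$ in the intersection; then $y \in U(z) \subseteq T(z) \subseteq \down{z}$ gives $y \preceq z$, and combined with $x \preceq y$ this yields $x \preceq z$, so $z \in \up{x}$ as required.

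The step I would look at most carefully is the closed-discrete requirement built into the definition of super-sticky. The transitivity argument above establishes stickiness cleanly but does not by itself isolate each point of $\up{x}$ within $\up{x}$, and I would want to check whether that condition is genuinely being invoked here or whether the setting of a topological partial order already forces discreteness. The latter is not obvious to me in full generality (for instance, on $\omega+1$ with its usual topological order, $\up{0}$ is all of $\omega+1$ and not discrete), so the resolution is likely either an appeal to further structure from the intended applications — an antichain or tree-like behavior on up-sets, for example — or a reading of \emph{super-sticky} here as the sticky condition alone. Pinning this down is the main thing I would resolve before declaring the proof complete.
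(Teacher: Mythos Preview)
Your argument for closedness and for the sticky implication is exactly the paper's proof, just unpacked: the paper compresses both into the single line $\down{y}\cap\up{x}\ne\emptyset \iff x\preceq y \iff y\in\up{x}$, and you have correctly extracted the two directions.

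Your worry about the closed-\emph{discrete} clause in the definition of super-sticky is entirely legitimate, and the paper's own proof does not address it either; your $\omega+1$ example shows that $\up{x}$ need not be discrete for a general topological partial order, so the lemma as literally stated is not true in full generality. In the paper this does no damage: the only invocation of the lemma (in the proof of Proposition~\ref{pr:order_implies_hD}) uses precisely the implication ``$y\notin\up{x}\Rightarrow U(y)\cap\up{x}=\emptyset$'' that you established, and nowhere relies on discreteness of $\up{x}$. So you have identified a genuine slip in the statement rather than a gap in your reasoning.
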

\begin{proof}
The claim follows immediately from the observation that  for any $y$ in $X$ we have:  \ $\down{y} \ \ \cap \up{x} \ne \emptyset \iff x \preceq y \iff y \in \ \up{x}$.
\end{proof}

\begin{prop}\label{pr:order_implies_hD}
Let $X$ be a space with topological partial order $\preceq$. 

(1) If every up-set $\up{x}$ is $D$ then $X$ is $D$.

(2) If every up-set $\up{x}$ is hereditarily $D$ then $X$ is hereditarily $D$.
\end{prop}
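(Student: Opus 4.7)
The plan is to apply Theorem \ref{Thr_Equivalence_D}, using the remark that it suffices to work with neighbornets below a fixed one; we choose $T(x) = (\down{x})^\circ$, which is a neighbornet precisely because $\preceq$ is topological. The key order-theoretic consequence is that for any $U \leq T$, $y \in U(d)$ implies $y \preceq d$. Part (2) reduces to part (1): if $Y$ is a subspace of $X$, the partial order restricts topologically to $Y$, and its up-sets are $\up{x} \cap Y$ for $x \in Y$, which are $D$-spaces whenever each $\up{x}$ is hereditarily $D$.

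For part (1), fix $U \leq T$, a $U$-sticky set $D$, and a point $x \in X$; we seek $D' \supseteq D$ that is $U$-sticky, satisfies $D \leq_U D'$, and has $x \in U(D')$. The first step is to show that $D_{\up{x}} := D \cap \up{x}$ is $U_{\up{x}}$-sticky in $\up{x}$. Indeed, given $z \in \up{x}$ with $U(z)$ meeting $D$, $U$-stickiness of $D$ in $X$ yields $z \in U(d)$ for some $d \in D$; since $z \preceq d$ and $x \preceq z$, transitivity gives $d \in \up{x}$, hence $d \in D_{\up{x}}$, and thus $z \in U_{\up{x}}(D_{\up{x}})$.

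Since $\up{x}$ is a $D$-space, Theorem \ref{Thr_Equivalence_D} applied inside $\up{x}$ produces a $U_{\up{x}}$-sticky $F \supseteq D_{\up{x}}$ in $\up{x}$ with $x \in U_{\up{x}}(F)$ and $(F \setminus D_{\up{x}}) \cap U_{\up{x}}(D_{\up{x}}) = \emptyset$. Set $D' = D \cup F$. That $D'$ is closed discrete in $X$ follows from $\up{x}$ being closed (Lemma \ref{l:upsets}); the inclusion $x \in U(F) \subseteq U(D')$ is immediate. For $U$-stickiness of $D'$, only the case that $U(y)$ meets $F$ is non-trivial: then $U(y) \cap \up{x} \neq \emptyset$, so $y \in \up{x}$ by super-stickiness of $\up{x}$ below $T$ (Lemma \ref{l:upsets}), and then $U_{\up{x}}$-stickiness of $F$ forces $y \in U_{\up{x}}(F) \subseteq U(D')$.

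The step I expect to be trickiest is the verification of $(D' \setminus D) \cap U(D) = \emptyset$, the non-trivial half of $D \leq_U D'$. If $z \in F \setminus D$ lies in $U(d)$ for some $d \in D$, then $z \preceq d$ and $x \preceq z$ (as $z \in F \subseteq \up{x}$), so transitivity yields $d \in \up{x}$, i.e.\ $d \in D_{\up{x}}$, and hence $z \in U_{\up{x}}(D_{\up{x}})$. The extension property of $F$ in $\up{x}$ then forces $z \in D_{\up{x}} \subseteq D$, contradicting $z \notin D$. It is precisely this delicate interlocking of transitivity with the $\leq_U$-bookkeeping that makes the down/up duality do the work.
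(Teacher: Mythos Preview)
Your proof is correct and follows essentially the same approach as the paper: verify clause~(1) of Theorem~\ref{Thr_Equivalence_D} for neighbornets below $T$, produce the new piece inside $\up{x}$ using its $D$-property, and establish $U$-stickiness of $D'$ via the super-stickiness of $\up{x}$ from Lemma~\ref{l:upsets}; the reduction of (2) to (1) is identical. The only variation is that the paper, after assuming $x\notin U(D)$, applies the $D$-property directly to the closed subspace $\up{x}\setminus U(D)$ to obtain $E$, whereas you invoke Theorem~\ref{Thr_Equivalence_D} inside $\up{x}$ to extend $D\cap\up{x}$ --- this costs you the extra check that $D\cap\up{x}$ is $U_{\up{x}}$-sticky, but is otherwise equivalent.
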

\begin{proof} We prove (1) by checking the first clause of Theorem~\ref{Thr_Equivalence_D}. 
Take any neighbornet $U$ on $X$ below $T(x)=\left(\down{x}\right)^\circ$, and any $U$-sticky set $D$. Fix $x\in X$. We need to find $D'$ which is closed discrete, $D ' \supseteq D$, $(D'\setminus D) \cap U(D) = \emptyset$, $U$-sticky and $x \in U(D')$.

We may assume $x \notin U(D)$ (otherwise take $D'=D$). By hypothesis, $\up{x}$ is a $D$-space, so is its closed subspace $\up{x} \setminus U(D)$. There is a closed discrete $E$ contained in $\up{x} \setminus U(D)$ such that $U(E) \supseteq \up{x} \setminus U(D)$.
Let $D'=D \cup E$. Note $E=D'\setminus D$. Clearly $D'$ is closed discrete, contains $D$, $U(D')$ contains $x$ (recall, $x \notin U(D)$), and $(D'\setminus D) \cap U(D) = \emptyset$. 

It remains to show $D'$ is $U$-sticky. Take any $y \notin U(D')$. Need to show $U(y) \cap D' = \emptyset$. But $U(D')=U(D) \cup U(E)$, so $y \notin \up{x}$. 
By Lemma~\ref{l:upsets}, as $y \notin \up{x}$ we have $U(y) \cap \up{x} = \emptyset$, so $U(y) \cap E$ is empty.
Further, $D$ is $U$-sticky, so $U(y) \cap D = \emptyset$, since $y \notin U(D) \subseteq U(D')$. Thus $U(y) \cap D'$ is empty, as required. 

\medskip

Now we deduce (2). Let $A$ be a subspace of $X$. Then   $\preceq$ restricted to $A$ is topological. Further up-sets in this subspace have the form $\up{x} \cap A$, which is $D$ by hypothesis.
So we can apply part (1) to deduce $A$ is $D$.
\end{proof}

\subsection{A Machine to Make Spaces that are \texorpdfstring{$P$}{P} but  Not \texorpdfstring{$D$}{D}}


We will show that certain box, $\square_n X_n$, and nabla products, $\nabla_n X_n$ are not $D$. The next lemma allows us to do this directly through a space $X$ containing the $X_n$'s when they form a $\subseteq$-chain, rather than the nabla and box products. 
Denote by $X_\delta$ the $G_\delta$-modification of $X$ ($G_\delta$-sets in $X$ are open in $X_\delta$).

\begin{lemma}\label{l:embed_in_nabla}
If $X$ is the union of $X_n$'s such that $X_n \subseteq X_{n+1}$, $n\in \omega$,  then $X_\delta$ embeds as a closed set in $\nabla_n X_n$.
\end{lemma}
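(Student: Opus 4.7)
The plan is to realize $X_\delta$ inside $\nabla_n X_n$ by sending each $x \in X$ to the class of a sequence that eventually stabilizes at $x$. Since $X = \bigcup_n X_n$ and the chain is increasing, for each $x$ the index $n_x := \min\{n : x \in X_n\}$ is defined. Fixing basepoints $a_n \in X_n$, I would define $f_x \in \prod_n X_n$ by $f_x(n) = x$ for $n \ge n_x$ and $f_x(n) = a_n$ otherwise, then set $\varphi(x) := [f_x]$. Since the basepoints affect only finitely many coordinates, $\varphi$ is well-defined. I then need to show $\varphi$ is (i) injective, (ii) continuous as a map $X_\delta \to \nabla_n X_n$, (iii) open onto its image, and (iv) has closed image.

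Parts (i)--(iii) are routine. For (i), $x \ne y$ forces $f_x(n) \ne f_y(n)$ on the cofinite set $n \ge \max(n_x,n_y)$. For (ii), given a basic open $q(\prod_n U_n) \ni [f_x]$, I write each $U_n = V_n \cap X_n$ with $V_n$ open in $X$; then the $G_\delta$-set $\bigcap_{n \ge n_x} V_n$ is an open neighborhood of $x$ in $X_\delta$ whose image lies in $q(\prod_n U_n)$. For (iii), given a basic $G_\delta$-neighborhood $V = \bigcap_k W_k$ of $x$ in $X_\delta$ (with $W_k$ open in $X$), I take $U_n = X_n \cap W_0 \cap \cdots \cap W_n$ for $n \ge n_x$ and $U_n = X_n$ otherwise; any $y$ with $[f_y] \in q(\prod_n U_n)$ then lies in $W_k$ for every $k$, hence in $V$.

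The substantive step is (iv). The key observation is that $[g] \in \varphi(X_\delta)$ if and only if $g$ is eventually constant, so given $[g] \notin \varphi(X_\delta)$ the set $S = \{n : g(n) \ne g(n+1)\}$ is infinite. For each $n \in S$, Hausdorffness of the ambient space $X_{n+1}$ (which contains both $g(n)$ and $g(n+1)$ by the chain condition) supplies disjoint opens $V_n \ni g(n)$ and $W_n \ni g(n+1)$. I then let $U_m$ be the intersection in $X_m$ of whichever of $V_m$ and $W_{m-1}$ are defined (reading absent factors as $X_m$); each $U_m$ is open in $X_m$ and contains $g(m)$. For any $N$, choosing $n \in S$ with $n \ge N$, the membership $U_n \subseteq V_n$ and $U_{n+1} \subseteq W_n$ forces $U_n \cap U_{n+1} \subseteq V_n \cap W_n = \emptyset$, so $\bigcap_{k \ge N} U_k = \emptyset$. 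Consequently $q(\prod_k U_k)$ is a neighborhood of $[g]$ meeting no $[f_x]$, and closedness follows.

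The main obstacle is the closedness step. It requires Hausdorffness of the $X_n$ (the implicit setting of the paper, notably in the compact Hausdorff applications) to separate the consecutive distinct values of $g$, and a careful coordinate-by-coordinate assembly of the separating opens so that each $U_m$ still contains $g(m)$ and is open in $X_m$ while the tail intersections become empty. The equivalent reformulation of closedness — namely, finding a basic box around $g$ with all tail intersections empty — is the conceptual content; the rest is bookkeeping.
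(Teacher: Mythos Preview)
Your argument is correct. Parts (i)--(iii) mirror the paper almost exactly (the paper uses a single basepoint $x_0 \in X_0$; in (ii) your intersection should technically start at the index $N$ from which $f_x(n)\in U_n$, not at $n_x$, but this is a harmless slip).

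For closedness (iv) you and the paper diverge. The paper sets $U_n = X_n \setminus \{f(i) : i<n,\ f(i)\ne f(n)\}$, relying only on $T_1$, and asserts that when $[f]\notin e[X]$ the box $\nabla_n U_n$ misses $e[X]$. That assertion is not correct as written: any $y\in X$ outside the range of $f$ lies in every $U_n$ once $n\ge n(y)$, so $e(y)\in\nabla_n U_n$; for instance, with $X_n=\{0,1,2\}$ discrete and $f(n)=n\bmod 2$, the point $y=2$ witnesses the failure. Your approach---separating consecutive distinct values $g(n)\ne g(n+1)$ by Hausdorffness and intersecting the resulting opens coordinatewise---produces a neighborhood whose tail intersections $\bigcap_{k\ge N}U_k$ are genuinely empty, so it actually proves closedness. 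The cost is the explicit Hausdorff hypothesis, which is harmless in all of the paper's applications. In short, your closedness step is not merely different from the paper's but repairs a gap in it.
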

\begin{proof}
Fix some $x_0$ in $X_0$. For $x$ in $X$ set $n(x)$ to be the first $n$ such that $x$ is in $X_n$, and then define $e_x: \omega \to \square_n X_n$ via: $e_x(i)$ is $x_0$ if $i<n(x)$ and is $x$ for all $i\ge n(x)$. Now define $e:X_\delta \to \nabla_n X_n$ by $e(x)=[e_x]$.
We show that $e$ is an embedding. 
It is clear that $e$ is a one-to-one mapping. 

First we check $e$ is continuous. To this end, take any $x$ in $X_\delta$ and basic open $\nabla_n (U_n \cap X_n)$ (where each $U_n$ is open in $X$) containing $e(x)$. 
This means there is an $N\ge n(x)$ such that if $n\ge N$ then $x=e_x(n) \in U_n$.
Let $V=\bigcap_{n \ge N} U_n$. 
Then $x$ is in $V$, which is open in $X_\delta$, 
and for every $y$ in $V$, $e(y)$ is in $\nabla_n (U_n \cap X_n)$ because, $e_y$ is in $e(y)$ and for $n \ge \mathop{max}\{ N, n(y)\}$ we have $y=e_y(n) \in V \subseteq U_n \cap X_n$.

Now we show $e$ is open from $X_\delta$ into $e[X]$. To see this, take any non-empty open set $V$ of $X_\delta$. Then we can find a decreasing sequence, $(U_n)_n$, of open sets in $X$ such that $V=\bigcap_n U_n$ and for all $n$ have $U_n \cap X_n \ne \emptyset$. Now note
$e(\bigcap_n U_n)=\{[e_y] : \forall n \, (y \in U_n)\} = \nabla_n (U_n \cap X_n) \cap e[X]$.

It remains to check that $e[X]$ is closed in $\nabla_n X_n$. Take any $f$ in $\square_n X_n$.
For each $n$ let $U_n=X_n \setminus \{ f(i) : i<n$ and $f(i) \ne f(n)\}$. 
Then $[f]$ is in the basic open set $U=\nabla_n U_n$, and if $[f]$ is not in $e[X]$ then $U$ is disjoint from $e[X]$. 
\end{proof}

Now we encapsulate a `machine' for building a $P$-space $X$ (and so $X_\delta=X$, avoiding the need to work with two topologies) which is the increasing union of open, nice subspaces, $X_n$, but which is not $D$, from a `base' space $B$.
This machine is essentially the ideas from example $\Gamma$ of \cite{vDW} with cardinals moved up from $\aleph_0$ to $\aleph_1$.

\begin{de}
A space $X$ is {\it $\kappa$-metrizable} if it has an open base $\mathcal{B} = \{ N(x,\alpha) : \alpha < \kappa, \; x\in X \}$ so that $\{ N(x,\alpha) : \alpha < \kappa \}$ is a neighborhood base at $x$, and given two points $x, y$ and two ordinals $\alpha \leq \beta < \kappa $ then (i) if $y\in N(x,\alpha)$ then $N(y,\beta) \sub N(x,\alpha)$; and (ii) if $y \notin N(x,\alpha)$ then $N(y,\beta) \cap N(x,\alpha) = \emptyset$.
\end{de}

\begin{thr}\label{thr:ex_machine}
Let $B$ be $\om_1$-metrizable but not metrizable, such that $2^{w(B)} \le |B|$.
Then $B$ can be partitioned, $B=\bigcup_n L_n$, so that every closed subset of $B$ of size $|B|$ meets every $L_n$.
Further, there is a topology, $\tau$, on the set $B$, refining the given topology so that $X=(B,\tau)$ has the following properties: $X$ is \emph{not} a $D$-space, however it is a $P$-space,  which is the increasing union of open subspaces $X_n=\bigcup_{k \le n} L_k$, where each $X_n$ is scattered of height $n$, and is locally both of size $\le \om_1$ and Lindelof.
\end{thr}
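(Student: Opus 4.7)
The plan is first to extract the Bernstein-like partition from the cardinality hypothesis, then to define $\tau$ by restricting each point's local base to the lower levels, and finally to exploit the partition's Bernstein property to defeat $D$-ness. The key observation driving the last step is that the natural neighbornet $U(x) = \{x\} \cup (N(x,0) \cap X_{n(x)-1})$ forces any closed discrete $D$ that covers $X$ to contain all of $L_0$, which then obstructs $D$ from being closed-discrete at any point of $L_1$.

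For the partition, the hypothesis $2^{w(B)} \le |B|$ caps the number of closed subsets of $B$ at $|B|$. Enumerate the closed subsets of size $|B|$ as $\{C_\alpha : \alpha < |B|\}$, and by recursion on $\alpha$ pick $x_{\alpha,n} \in C_\alpha$, one for each $n \in \omega$, distinct from all previously chosen points; this is possible since fewer than $|B|$ points have been chosen by stage $\alpha$ while $|C_\alpha| = |B|$. Set $L_n = \{x_{\alpha,n} : \alpha < |B|\}$ for $n \ge 1$ and dump the leftover points of $B$ into $L_0$. By construction every $C_\alpha$ meets every $L_n$, and a parallel enumeration of the base (also of size $\le |B|$) can be folded in to guarantee $L_0$ is dense in $B$.

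For the topology, set $X_n = \bigcup_{k \le n} L_k$ (with $X_{-1} = \emptyset$) and declare $V(x, \alpha) := \{x\} \cup (N(x, \alpha) \cap X_{n-1})$, $\alpha < \omega_1$, to be a base at each $x \in L_n$. The $\omega_1$-metric axioms ensure this defines a topology $\tau$ that refines the original one, since $N(x,\alpha) = \bigcup \{V(y,\alpha) : y \in N(x,\alpha)\}$. Then $X$ is a $P$-space because $V(x, \sup_k \alpha_k) \subseteq \bigcap_k V(x, \alpha_k)$ and $\sup_k \alpha_k$ remains below $\omega_1$. Each $X_n$ is $\tau$-open because $V(x,0) \subseteq X_{n(x)} \subseteq X_n$ for $x \in X_n$, and -- using density of $L_0$ and the iterated structure -- $L_k$ is precisely the $k$-th Cantor-Bendixson level of $X_n$, giving the stated scattered height. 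Locally of size $\le \omega_1$ and locally Lindelof come from cutting the base down to values of $\alpha$ where $|N(x,\alpha)| \le \omega_1$; the $P$-space structure then delivers Lindelofness of such neighborhoods.

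Finally, for $X$ not being a $D$-space, use the neighbornet $U(x) = V(x,0)$ and suppose for contradiction that $D \subseteq X$ is closed discrete with $U(D) = X$. Since $U(x) = \{x\}$ for $x \in L_0$, necessarily $L_0 \subseteq D$. Now for every $y \in L_1$: by discreteness of $D$ if $y \in D$ and by closedness of $D$ if $y \notin D$, there exists $\alpha_y < \omega_1$ with $V(y, \alpha_y) \cap (D \setminus \{y\}) = \emptyset$; since $L_0 \subseteq D$, this forces $N(y, \alpha_y) \cap L_0 = \emptyset$. The open set $W := \bigcup_{y \in L_1} N(y, \alpha_y)$ then contains $L_1$ but misses $L_0$, so $B \setminus W$ is a closed subset of $B$ of size $\ge |L_0| = |B|$. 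The Bernstein property forces $B \setminus W$ to meet $L_1$, contradicting $W \supseteq L_1$. The main obstacle in the overall argument is the local $\omega_1$-smallness and Lindelofness clauses of Part 2; the non-$D$ argument is the short, clean payoff once $L_0 \subseteq D$ is observed.
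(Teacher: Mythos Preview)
Your not-$D$ argument is correct and rather elegant, but the topology you define does not satisfy the theorem's requirement that $X$ be locally of size $\le \omega_1$ and locally Lindel\"of. With $V(x,\alpha) = \{x\} \cup (N(x,\alpha) \cap X_{n-1})$, the basic neighborhoods inherit the size of the $\omega_1$-metric balls $N(x,\alpha)$, and in the intended applications these balls all have size $|B|$: for instance when $B = 2^{\omega_1}$ with the $G_\delta$ topology, every $N(x,\alpha)$ has cardinality $2^{\omega_1}$. Your proposed fix, ``cutting the base down to values of $\alpha$ where $|N(x,\alpha)| \le \omega_1$,'' therefore fails outright, since no such $\alpha$ need exist. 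And since $V(x,0)$ for $x \in L_1$ is then $\{x\}$ together with a $|B|$-sized set of $\tau$-isolated points of $L_0$, it is not Lindel\"of either. (A smaller gap: your scattered-height claim needs every $L_k$, not just $L_0$, to be dense in $B$; your Bernstein construction does not arrange that.)

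The paper's construction is genuinely different here. Rather than take all of $N(y,\alpha) \cap X_{n-1}$, for each $y \in L_n$ it selects a single $\omega_1$-sequence $\langle s(y,\beta) : \beta < \omega_1 \rangle$ of distinct points from lower levels converging to $y$ in $B$, and sets $G(y,\alpha) = \{y\} \cup \bigcup_{\beta > \alpha} G(s(y,\beta),\beta)$. Each $G(y,\alpha)$ is then a height-$n$ tree with $\omega_1$-branching, which gives local size $\omega_1$ and local Lindel\"ofness by an easy induction. The price is that your ``$L_0 \subseteq D$'' trick is no longer available: with this thin topology each $U(y)=G(y,0)$ has size only $\omega_1$, so covering merely forces $|D| = 2^{\kappa}$. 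The paper then finds a level $L_{n-1}$ with $|D \cap L_{n-1}| = 2^\kappa$, extracts a $\kappa$-sized dense-in-itself subset, and identifies it with one of the sets $K_\gamma$ enumerated in advance, whose associated point $p(\gamma,n)\in L_n$ was \emph{built} to be a $\tau$-limit of $K_\gamma$. This is precisely why the paper's partition is constructed not from arbitrary large closed sets (as in your Bernstein argument) but from the closures of small dense-in-themselves sets, with the sequences $s(y,\beta)$ chosen to respect that enumeration.
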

\begin{proof} 
Let $\kappa=w(B)$. As $B$ is not metrizable, $\kappa$ is uncountable. Note that as  $2^{w(B)} \le |B|$, in fact $|B|=2^\kappa$. For each $y$ in $B$ fix a neighborhood base, $N(y,\alpha)$ for $\alpha<\om_1$, witnessing $\om_1$-metrizability. 
We say that a subset $S$ of $B$ is \emph{dense in itself} if every neighborhood of every point of $S$ contains at least $\om_1$ elements of $S$. 
Note that if $S$ is a subset of $B$ of cardinality $>\kappa$ then it contains a closed in $S$, dense in itself subset $S'$ of the same size, $|S'|=|S|$.
To see this let $S'=S \setminus \bigcup \{ U_y : U_y$ is an open neighborhood of a $y$ in $S$ such that $U_y \cap S$ is countable$\}$. Observe that if $S'$ is non-empty, it is dense in itself and closed in $S$. To see $S'$ is non-empty note that the weight of $B$ is $\kappa$ the collection of open sets has a subcover of size no more than $\kappa$, the union of which meets $S$ in no more than $\kappa$ points.
Hence we may assume that $B$ is dense in itself. Otherwise, let $B'$ be a dense in itself subset of $B$ of the same size as $B$. Repeat the construction below on $B'$, and then add the points of $B \setminus B'$ as isolated points.

 List all subsets of $B$ which are: dense in themselves, have  size $\kappa$, and which have closure of size $2^{\kappa}$, as 
 $\{ K_\gamma : \gamma < 2^{\kappa} \}$. %
Recursively construct points $p(\alpha, n)\in \overline{K}_\alpha \setminus \{ p(\beta, k) : \beta \leq \alpha \text{ and } k < n \}$. 
For $n>1$, set $L_n = \{ p(\alpha, n): \alpha < 2^{\kappa} \}$ and $L_1 = B \setminus \bigcup_{n > 1} L_n$.
Take any closed subset, $C$, of $B$ of size $2^\kappa$. Passing to a closed subset of the same size, we can suppose $C$ is dense in itself. As the weight of $B$ is $\kappa$, $C$ contains some $K_\gamma$. Now, for each $n$, the point $p(\gamma,n)$ witnesses that $C$ meets $L_n$.

Now we define the topology $\tau$ on $B = \bigcup_n L_n$ (to get $X=(B,\tau)$). 
For every $\gamma$ and $n>1$, let $y=p(\gamma,n)$, 
and choose distinct (here we use the fact that $B$ is dense in itself) points $s(y,\alpha)\in N(y,\alpha)$, for $\alpha < \omega_1$, where the $s(y,\alpha)$'s are additionally required to be in $K_\gamma$ if $K_\gamma$  is a subset of $L_{n-1}$. 
The topology $\tau$ is specified by constructing a neighborhood base $\{ G(y, \alpha) : \alpha < \omega_1 \}$ for $y \in L_n$, recursively on $n$, as follows.
For $y\in L_1$, define $G(y, \alpha) = \{y\}$, for all $\alpha< \omega_1$. 
Now suppose $\{ G(y,\alpha) : \alpha< \omega_1 \}$ has been defined for $y\in \bigcup_{k<n} L_k$. Then for $y\in L_n$ define $G(y, \alpha) = \{ y \} \cup \bigcup_{\beta > \alpha} G(s(y,\beta),\beta)$, $\alpha < \omega_1$.

Set $X_n = \bigcup_{k \le n} L_k$. We check some properties of $X$ and the $X_n$'s: (1) $X$ is a $P$-space, (2) $X$ is scattered with scattered height $\om$, and $X_n=\{ x \in X : \mathop{ht}(x)\le n\}$, (3) locally $X$ is of size $\le \om_1$ and Lindelof, (4) $X$ is Hausdorff and zero-dimensional, hence $T_3$, and (5) $X$ is not $D$. 
Easy inductive arguments establish (1)-(3). 
From (2) it follows that each $X_n$ is open. 
Clearly the topology on $X$ refines that on $B$, hence $X$ is $T_2$. Since it is a $P$-space and locally Lindelof we see the basic neighborhoods are closed (hence clopen), and so $X$ is zero-dimensional (and $T_3$).
Towards (5), define the neighbornet $U$ on $X$ by $U(y)=G(y,0)$. 
Suppose $D$ is a subset of $X$ such that $U(D)=X$.
We show $D$ is not closed discrete. Hence $X$ is not a $D$-space.

First note that as each $U(y)$ has size $\om_1$ but $X$ has size $2^{\kappa}$, and $\kappa$ is uncountable, we must have $|D|=2^{\kappa}$, and thus for some $n>1$ we have $D'=D \cap L_{n-1}$ also of size $2^{\kappa}$ (note $\mathop{cf}(2^\mu) > \mu$ for all infinite $\mu$).
We show that the closure of $D'$ in $X$ meets $L_n$, and so neither $D'$ nor $D$ is closed discrete. 
As observed above, passing to a subset, we can assume $D'$ is dense in itself (in $B$). 
Since the weight of $B$ is $\kappa$, there is a dense (in $B$) subset $K$ of $D'$ which has size $\kappa$. As $D'$ is dense in itself, so is $K$. Also observe that the closure in $B$ of $K$ contains $D'$, and so has size $2^{\kappa}$.
Hence $K=K_\gamma$ for some $\gamma$. 
Let $y=p(\gamma,n)$. Then $y$ is in $L_n$, and - by construction - every basic neighborhood, $G(y,\alpha)$ of $y$, in $X$, meets $K \subseteq D'$. In other words, $y$ is in $\overline{D'} \cap L_n$ -- as required.
\end{proof}

\section{General Compacta}\label{sec:cpta}
We show here that the box product of compact spaces need not be a $D$-space.
We give two examples which use very different techniques. The first, a consistent example built via our `P not D' machine, has  weight $\aleph_2$, which is minimal. The second, which exists in ZFC, has weight $\mathfrak{c}^+$. This example is related to arguments of Scott and van Douwen (see \cite{van1980covering}) showing that in general box products of compact spaces need not be normal (or countably paracompact, or other weak covering and separation properties).

Towards our first example we review real-compactness, measurable cardinals, and the connection between them. 
A space $X$ is \emph{real-compact} if it embeds as a closed subset in some $\mathbb{R}^\kappa$. We can take $\kappa =|C(X)|$. Clearly a product of real-compact spaces is real-compact. A paracompact space is real-compact
provided it has no closed discrete subsets of size the first measurable cardinal.
Measurable cardinals are regular and strong limits. So they are indeed `large' - far above $\beth_{\om_1}$, for example. Below, when we say a set is `small' we mean of size strictly less than the first measurable.
(The following example is inspired by an example of a Lindelof $D$-space whose product with a separable metrizable space is not $D$, which uses a trick of Pzymusinski.)

\begin{exa}
Assuming ($2^\om=\om_1$ and $2^{\om_1}=\om_2$),  the box product, $\square (I^{\om_2})^\om$, of compacta of weight $\om_2$ is not $D$.
\end{exa}
\begin{proof}
Let $B=2^{\om_1}$ with the countable box product topology. Then $B$ is $\om_1$-metrizable. Note that as $2^\om=\om_1$, the weight of $B$ is $\om_1$, and so $2^{w(B)} \le |B|$. 
Hence we can apply Theorem~\ref{thr:ex_machine} to get a topology $\tau$ on $B$ refining the given topology, so that $X=(B,\tau)$ is the increasing union of open subspaces, $X_n$, and $X$ is a $P$-space, locally (Lindelof and size $\le \om_1$), and not $D$.

Let $Z=\bigcup_{n>1} X_n$. Note that $Z$ has exactly the same properties as $X$.
Let $Y=Z \cup X_1$ topologized so that $Z$ is an open subspace with the topology above, and points in $X_1$ get neighborhoods as points of $B$.
This topology is Hausdorff, zero-dimensional and $P$.

We check that $Y$ is $\om_1$-Lindelof. Take any open cover $\mathcal{U}$ of $Y$ by basic open sets. Since the weight of $B$, and so $X_1$ in $Y$, is $\om_1$, there is an $\om_1$-sized subcollection whose union, $U$, covers $X_1$. 
Then $C=Y \setminus U$ is a closed subset of $B$ contained in $Z$. If $C$ has size $\le \om_1$ then we can obviously cover it with $\le \om_1$ more elements of $\mathcal{U}$, and so are done in this case. 
Otherwise, $|C|>\om_1$, so - by the second cardinal arithmetic hypothesis - $|C|=2^{\om_1}=|B|$.
But Theorem~\ref{thr:ex_machine} states that $C$ must meet $L_1$, which is contained in $X_1$ - contradiction.

Now let $W$ be the product of $Y$ with $Z$ as a subspace of $B$. It is a $P$-space. 
Observe that the diagonal set, $\Delta_Z=\{(z,z) : z \in Z\}$ is a closed subset of $W$ and it has the topology that comes from $X$ (this topology is finer than that coming as a subspace of $B$). Hence $W$ is not $D$. 
However, $W$ is real-compact because: (1) $Y$ is real-compact ($\om_1$-Lindelof and $P$-space implies paracompact, and $Y$ is `small'), (2) $Z$ is real-compact (paracompact and `small') and (3) the product of two real-compact spaces is real-compact.

The weight of $W$ is $2^{\om_1}$, so $|C(W)| \le 2^{\om_1}$. Hence $W$ embeds as a closed subset in $\mathbb{R}^{2^{\om_1}}$, or equivalently, in $(0,1)^{2^{\om_1}}$. But note that $\left((0,1)^{2^{\om_1}}\right)_\delta$ is a \emph{closed} subset of $\left([0,1]^{2^{\om_1}}\right)_\delta$.
Hence, $W=W_\delta$ embeds as a closed set in $\left(I^{2^{\om_1}}\right)_\delta$, which embeds as a closed set in $\nabla \left(I^{2^{\om_1}}\right)^\om$. 
Which means this last space is not $D$. Recalling that $2^{\om_1}=\om_2$, and $\nabla \left(I^{2^{\om_1}}\right)^\om$ is $D$ if and only if $\square \left(I^{2^{\om_1}}\right)^\om$ is $D$, we are done.
\end{proof}

Now for our second example. 
\begin{exa} Let $K=\{0,1\}^{\mathfrak{c}^+}$, a compact space of weight $\mathfrak{c}^+$. 
The box product $\square K^\om$ is not $D$.
\end{exa}
\begin{proof} 
First note that $K$ with the $G_\delta$-topology, $K_\delta$,  contains a closed discrete subset of size continuum, and embeds as a closed set in $\square K^\om$.
Next, $K$ is homeomorphic to $K^{\mathfrak{c}^+}$, and so $K_\delta$ is homeomorphic to $(K_\delta)^{\mathfrak{c}^+}$ with the countable box product topology. 
Hence $D(\mathfrak{c})^{\mathfrak{c}^+}$, with the countable box product topology, embeds as a closed set in $\square K^\om$. 
(This argument is from \cite{van1980covering}.) 
But now this latter space is not $D$ by Proposition~\ref{P:compacta-non-D}.
\end{proof}

It remains to show that $D(\mathfrak{c})^{\mathfrak{c}^+}$, with the countable box product topology,  is not $D$.  
In \cite{hirata2015d} Hirata and Yajima proved that 
the space $\omega^{\omega_1}$, with the usual product topology, is not $D$.
Here we lift the cardinals. The argument has some subtleties, and 
in our proof we use elementary submodels that are closed under $\omega$-sequences. 

\begin{prop}\label{P:compacta-non-D}
Let $\kappa \leq \mathfrak{c}$ be a cardinal and let $\lambda > \mathfrak{c}$ be a regular cardinal. Then the space $D(\kappa)^{\lambda}$ with countable boxes is not $D$.
\end{prop}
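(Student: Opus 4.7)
My approach adapts the Hirata--Yajima proof that $\omega^{\omega_1}$ is not $D$, replacing $\omega_1$ by $\lambda$ and lifting the model-theoretic argument from countable elementary submodels to submodels of size $\mathfrak{c}$ closed under countable sequences. Writing $X = D(\kappa)^\lambda$ with the countable box topology, every basic open neighborhood of a point $x$ has the form $B(x,S) = \{y : y \rest S = x \rest S\}$ for some $S \in [\lambda]^{\le \om}$, so a neighbornet is (after shrinking) encoded by a map $x \mapsto S(x)$ with $U(x) = B(x, S(x))$. I would cook up a specific $U$ by prescribing $S(x)$ through a fixed coding: fix an enumeration $\{t_\eta : \eta < \mathfrak{c}\}$ of $D(\kappa)^\om$ (available since $\kappa^\om \le \mathfrak{c}^\om = \mathfrak{c}$), and let $S(x)$ depend on the address $\eta$ of $x \rest \om$ together with a further countable set of coordinates chosen to encode $x$'s later behavior.

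Assume for contradiction that $D \sub X$ is closed discrete with $U(D) = X$. Fix a large regular $\chi$ and, using $\mathfrak{c}^\om = \mathfrak{c}$, take an elementary submodel $M \prec H(\chi)$ of size $\mathfrak{c}$, closed under countable sequences, with $U$, $D$, and the coding data in $M$. Regularity of $\lambda > \mathfrak{c}$ gives $\delta := \sup(M \cap \lambda) < \lambda$. The pivotal property: for each $x \in M \cap X$, the countable set $S(x)$ belongs to $M$, so $\om$-closure forces $S(x) \sub M \cap \lambda \sub \delta$; hence for such $x$, the condition $y \in U(x)$ depends only on coordinates $< \delta$.

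I would then construct a witness $y \in X$ outside every $U(x)$ for $x \in D$. First, build $y \rest \delta$ to disagree with each $x \in D \cap M$ on at least one coordinate of $S(x) \sub \delta$; since $|D \cap M| \le \mathfrak{c}$, this is a diagonalization against $\mathfrak{c}$ countable constraints inside $\delta$ and is easily accommodated provided $\kappa \ge 2$. Second, extend $y$ to $[\delta, \lambda)$ to evade $U(x)$ for every $x \in D \setminus M$: the coding ensures that an offending $x$ has its address recoverable from $y$, and elementarity of $M$ should then reflect the existence of such an $x$ back into $M$, contradicting $x \in D \setminus M$.

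The principal difficulty is this second stage: engineering the coding so that the existence of an offending $x \in D \setminus M$ reflects down into $M$. The closed discreteness of $D$ (which makes $D$ locally small even in the countable box topology) together with the $\om$-closure of $M$ should drive the reflection, but this is the technical heart of the argument, and is where $\lambda > \mathfrak{c}$ is essential --- it provides the room above $\delta$ in which the diagonalizing tail of $y$ can live.
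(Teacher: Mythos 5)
Your proposal is not a proof: the step you yourself flag as ``the technical heart of the argument'' --- designing the coding so that an offending $x \in D \setminus M$ reflects into $M$ --- is exactly where all the work lies, and it is left entirely unspecified. Worse, the overall shape of your argument is suspect. You aim to contradict $U(D)=X$ by building a point $y \notin U(D)$; but $D=X$ always satisfies $U(D)=X$, so any contradiction must be extracted from closed discreteness of $D$, and your sketch never says how that happens. Even your ``easy'' first stage is not easy: with $\kappa \le \mathfrak{c}$ there are only $\mathfrak{c}$ patterns on a countable coordinate set, so the $\le \mathfrak{c}$ points of $D\cap M$ can in principle forbid \emph{every} restriction $y\rest S$ for a single countable $S$, and no diagonalization below $\delta$ is possible without further information about $D$.

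The paper's proof avoids all of this by inverting the logic: rather than evading $U(D)$, it exhibits a non-isolated point \emph{of} $D$. It passes to the closed subspace $F=\bigcup_{\alpha<\lambda}F_\alpha$, where $x\in F_\alpha$ iff $x\rest\alpha$ is one-to-one into $D(\kappa)\setminus\{0\}$ and $x(\beta)=0$ for $\beta\ge\alpha$, and uses the single neighbornet $V(x)=\pi_\alpha^{-1}\{0\}\cap F=\bigcup_{\gamma\le\alpha}F_\gamma$ for $x\in F_\alpha$. Any $D$ with $V(D)=F$ must meet $F_\alpha$ for cofinally many $\alpha<\lambda$; fixing $M\prec H(\lambda)$ of size $\mathfrak{c}$ closed under $\om$-sequences with $\delta=\sup(M\cap\lambda)$ and a point $d\in D\cap F_\alpha$ with $\alpha>\delta$, elementarity reflects, for each countable $E\subseteq\delta$, \emph{two} distinct elements of $D\cap M$ agreeing with $d$ on $E$ (the second obtained by adding one more coordinate $\alpha'\in M$ with $\alpha'>\max E$, where $d(\alpha')\ne 0$ forces a higher level). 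Hence $d$ is not isolated in $D$, so $D$ is not closed discrete. You would need either to supply the missing coding and reflection mechanism in full, or to adopt a structure like $F$ on which a canonical neighbornet makes the reflection automatic.
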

\begin{proof} 
Write $\pi_\gamma$ for the projection to the $\gamma$-th coordinate of $D(\kappa)^{\lambda}$, so $\pi_\gamma(x)=x(\gamma)$. 
Since $D(\kappa)$ has the discrete topology, in the countable box topology, basic open neighborhoods of an $x$ in $D(\kappa)^{\lambda}$ have the form $N(x, C) = \bigcap_{\gamma \in C} \pi^{-1}_\gamma \{x(\gamma)\}$ for countable subsets $C$ of $\lambda$.
Denote by $c_0$ the constant $0$-valued function  in $D(\kappa)^{\lambda}$.

For each $\alpha$ in $\lambda$, define the subset $F_\alpha$ of $D(\kappa)^{\lambda}$ as: $x$ is in $F_\alpha$ if and only if (i) $x\rest \alpha$ is a one-to-one function in $(D(\kappa)\setminus \{ 0 \} )^{\alpha}$, and (ii) $x(\beta) = 0 $, for all $\beta \geq \alpha$.
Observe that the $F_\alpha$'s are pairwise disjoint.
Next we verify that $F = \bigcup_{\alpha \in \lambda} F_\alpha$ is closed in $D(\kappa)^{\lambda}$. 
To see this take any $y$ not in $F$. Then  either (a) there are $\alpha < \beta$ such that $y(\alpha) = 0 < y(\beta)$, or (b) there is a minimal $\alpha$ so that for all $\beta \geq \alpha$, $y(\beta) = 0$ and $y\rest \alpha$ is not one-to-one.
For case (a) the open neighborhood $\pi_{\alpha}^{-1}(y(\alpha)) \cap \pi_{\beta}^{-1} (y(\beta))$ of $y$ is disjoint from  $F$, as required. For case (b) as $y\rest \alpha$ is not one-to-one there are $\beta, \gamma < \alpha$ such that $y(\beta) = y(\gamma)$. Then all elements of the open set  $\pi_{\beta}^{-1}(y(\beta)) \cap \pi_{\gamma}^{-1} (y(\gamma))$ are not one-to-one in $D(\kappa)^{\alpha}$ either, hence disjoint from $F$.

As $F$ is closed in $D(\kappa)^{\alpha}$, the latter space is not $D$ if the former is not $D$. 
We show that $F$ is not a $D$-space.
To this end, define the neighbornet $V$ on $F$, by $V(x) = \pi_\alpha^{-1} \{0\} \cap F$, for $x\in F_\alpha$. Observe that $V(x)=\bigcup \{F_\gamma : \gamma \le \alpha\}$. 
Take any $D\subseteq F$ such that $F = V(D)$. From the observation it is immediate that $\{\alpha : D \cap F_\alpha \ne \emptyset\}$ is cofinal in $\lambda$. We show that $D$ is not closed discrete with the use of elementary submodels.
\bigskip

Let $M$ be an elementary submodel of $H(\lambda)$ of size the continuum, $\mathfrak{c}$, and closed under $\omega$-sequences with $\langle F_\alpha : \alpha < \lambda \rangle, D, V \in M$. Let $\delta = \sup M \cap \lambda$. Fix any $d\in D \cap F_\alpha$ for some $\alpha > \delta$. 

\begin{claim}
For every countable set of coordinates $E \subseteq \delta$, there is $a \in M\cap D$ such that $d\rest E = a \rest E$ and $a\in F_\beta$ for some $\beta < \delta$.
\end{claim}
Note that $E \subseteq M$, hence $E\in M$ since $M$ is closed under $\omega$-sequences. Also, denote by $t$ the function $d\rest E$ and note that $t$ is in $M$ (we don't require that $d$ belongs to $M$) as $M$ contains every countable partial function with domain a countable set of $\delta$ and range in $D(\kappa)$ (i.e. $D(\kappa)^{E} \subseteq M$ for each $E\in [\delta]^\omega$). 

The model $H(\lambda)$ satisfies the following formula ``there exists an element $d$ in $D \cap \bigcap_{\gamma \in E} \pi_\gamma^{-1} (t(\gamma))$''. 
Since all required parameters are in $M$ ($\pi_\gamma$ is in $M$ as long as $\gamma $ in $M$), this formula con be reflected, so there is $a \in M \cap D \cap \pi_{\gamma \in E}^{-1} (t(\gamma))$.
Also the statement ``there is $\alpha$ such that $a\in F_\alpha$'' can be reflected, thus $\alpha \in M$. This implies $\alpha < \delta$. This concludes the claim.

\begin{claim}
The model $M$ satisfies ``any neighborhood around $d$ has at least two elements of $D$''.
\end{claim}
Take a countable set of coordinates $E \subseteq \delta$ and let $t = d \rest E$. 
By the preceding claim, there is $a \in M\cap D \cap \bigcap_{\gamma \in E} \pi^{-1} (t(\gamma))$ with $a\in F_\alpha$ for some $\alpha < \delta$. Now take $\alpha' > \max \{ E, \alpha \}$ in $M$. Define $E' = E \cup \{ \alpha' \}$ and $t' = t \cup \{(\alpha', d(\alpha'))\}$. Then there is $b \in M\cap D \cap \bigcap_{\gamma \in E'} \pi^{-1} (t(\gamma))$ such that $b\in F_\beta$ for some $\beta < \delta$. 
Then $M$ satisfies that the open neighborhood $\bigcap_{\gamma \in E} \pi^{-1} (t(\gamma))$ contains at least two elements of $D$. The claim is finished.
\medskip

The claim implies that $D$ is not closed discrete. Hence $F$, and thus $D(\kappa)^{\lambda}$ with the countable box topology, is not $D$.
\end{proof}


\section{Hereditarily Paracompact Scattered}\label{sec:hsct}

Let $X$ be a scattered space. Denote by $\mathop{I}(X)$ the set of isolated points of $X$.
Define recursively, $X^{(0)}=X$, $X^{(\alpha+1)}=X^{(\alpha)} \setminus \mathop{I}(X^{(\alpha)})$ and $X^{(\lambda)}=\bigcap_{\alpha < \lambda} X^{(\alpha)}$ for limit $\lambda$. 
Define the \emph{scattered height} of $X$ by $\mathop{ht}(X)=\sup \{ \alpha : X^{(\alpha)} \ne \emptyset\}$.
For $x$ in $X$ define the \emph{scattered height} of $x$ in $X$ by $\mathop{ht}(x,X)=\sup \{ \alpha : x \in X^{(\alpha)}\}$.
For each $x$ in $X$ the set $W_x=\{x\} \cup \{ y : \mathop{ht}(y,X) < \mathop{ht}(x,X)\}$ is an open neighborhood of $x$, we call it the \emph{canonical} neighborhood of $x$.
\medskip

A space $X$ is \emph{ultraparacompact} if every open cover of $X$ has a pairwise disjoint open refinement. 
In \cite{van1980covering} E. van Douwen proved that a paracompact zero-dimensional space $X$ is ultraparacompact. His proof is simple and worth repeating.
Every open cover $\mathcal{U}$ of $X$ has a locally finite refinement $\mathcal{V}$ by clopen sets. Enumerate $\mathcal{V} = \{ V_\alpha : \alpha < \kappa \}$. Then $\mathcal{W} = \{ V_\alpha \setminus \bigcup_{\beta < \alpha} V_\beta : \alpha <\kappa \}$ is a disjoint refinement of $\mathcal{V}$ covering $X$. Since $\mathcal{V}$ consist of clopen sets and it is locally finite, $\mathcal{W}$ consist of open sets (in other words, the initial unions $\bigcup_{\beta < \alpha} V_\beta$ are closed).

\begin{lemma}
For every hereditarily paracompact scattered space $X$ there is a partial order $\preceq_X$ such that for all $x$ in $X$: the down-set, $\down{x}$, is open and contained in the canonical neighborhood, $W_x$, of $x$, while the  up-set, $\up{x}$, is finite.
\end{lemma}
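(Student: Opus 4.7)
The plan is to prove the lemma by transfinite induction on $\alpha = \mathop{ht}(X)$. The base case $\alpha = 0$ makes $X$ discrete and we simply take $\preceq_X$ to be equality. For $\alpha > 0$ I would split according to whether $X^{(\alpha)}$ is non-empty; the first sub-case allows peeling off a top level, while the second is best handled by refining the cover $\{W_x\}$.

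Suppose first that $L_\alpha = X^{(\alpha)}$ is non-empty. Then $L_\alpha$ is closed discrete in $X$, and using hereditary paracompactness together with zero-dimensionality (from scattered $T_3$) I can expand $L_\alpha$ to a discrete family of clopen neighborhoods $\{C_x : x \in L_\alpha\}$ with $x \in C_x \subseteq W_x$. Putting $U = \bigcup_x C_x$ and $Y = X \setminus U$, both $Y$ and each $C_x \setminus \{x\}$ are hereditarily paracompact scattered spaces of scattered height strictly less than $\alpha$, so by induction they carry partial orders satisfying the conclusions of the lemma. I then extend each $\preceq_{C_x \setminus \{x\}}$ to $\preceq_{C_x}$ by placing $x$ above everything: declare $\down{x} = C_x$ and leave down-sets of other points unchanged. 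Assembling $\preceq_X$ along the disjoint clopen decomposition $X = Y \sqcup \bigsqcup_x C_x$, each up-set gains at most the single new point $x$ and so remains finite.

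In the remaining case $X^{(\alpha)} = \emptyset$, so $\alpha$ is a limit, every point has $\mathop{ht}(x) < \alpha$ and each $W_x$ has scattered height $\mathop{ht}(x)+1 < \alpha$. Since $X$ is ultraparacompact (paracompact plus zero-dimensional, as recalled in the preceding text), the cover $\{W_x : x \in X\}$ admits a clopen partition refinement $\mathcal{P}$, and each $P \in \mathcal{P}$ has scattered height $< \alpha$. Induction gives a partial order on each $P$, and combining these across the disjoint clopen partition produces $\preceq_X$ on $X$.

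The verifications rely on the fact that, for a clopen $P \subseteq X$, the scattered derivatives satisfy $P^{(\beta)} = P \cap X^{(\beta)}$, so scattered heights inside $P$ coincide with those in $X$ and the canonical neighborhood of $z$ in $P$ is simply $W_z \cap P$; openness of each down-set and the inclusion $\down{z} \subseteq W_z$ then follow automatically from the inductive hypotheses. I expect the main obstacle to lie in the peel-off case: producing the discrete clopen expansion of the closed discrete set $L_\alpha$ (standard from paracompact Hausdorff plus zero-dimensionality) and checking the identity $(C_x \setminus \{x\})^{(\beta)} = C_x^{(\beta)} \setminus \{x\}$ for every $\beta$ (a short $T_1$ induction), which is what ensures that removing the unique top-height point of $C_x$ actually drops the scattered height below $\alpha$ and lets the induction go through.
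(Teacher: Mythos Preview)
Your proof is correct and follows essentially the same approach as the paper: transfinite induction on scattered height, using ultraparacompactness (equivalently, paracompact plus zero-dimensional) to obtain clopen partitions into pieces of strictly smaller height, and adjoining a new maximum for each top-level point. The only cosmetic difference is that you split cases according to whether $X^{(\alpha)}$ is non-empty (and handle all top-level points at once via a discrete clopen expansion of the closed discrete set $L_\alpha$), whereas the paper splits by limit versus successor and first reduces to a single top point before applying hereditary ultraparacompactness to its complement; your organization is arguably cleaner since it also covers the limit case with $X^{(\alpha)}\neq\emptyset$ directly.
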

\begin{proof} First note that scattered spaces are zero-dimensional, so a hereditarily paracompact scattered space is hereditarily ultraparacompact. 
Now we prove the claim by induction on the scattered height of $X$. If $X$ is discrete (scattered height zero) then for $\preceq_X$ take equality.

Suppose $\mathop{ht}(X)=\lambda$ is a limit. Then let $\mathcal{W}=\{W_x : x \in X\}$. This is an open cover of $X$ by sets each with scattered height strictly less than $\lambda$. 
As $X$ is ultraparacompact there is a clopen partition of $X$, say $X=\bigoplus \{X_s : s \in S\}$, where each $X_s$ is hereditarily paracompact, and scattered of height strictly less than $X$.
Inductively, then, there are partial orders, $\preceq_{X_s}=\preceq_s$ on each $X_s$ as in the claim. Now note that if $\preceq_X$ is the union of  all the $\preceq_s$ (so $x \preceq_X y$ if and only if $x,y$ are both in some $X_s$ and $x \preceq_s y$) then $\preceq_x$ has the required properties, and we are done in this case.

Now suppose $\mathop{ht}(X)=\alpha+1$, a successor.
In this case $X^{(\alpha+1)}$ is a closed discrete subspace of $X$, and contains the points in $X$ of scattered height $\alpha+1$.
Again, let $\mathcal{W}=\{W_x : x \in X\}$. This is an open cover of $X$. As $X$ is ultraparacompact  there is a clopen partition refining it, and so, after a little tidying, there is a clopen partition of $X$, say $X=\bigoplus \{X_s : s \in S\}$, where each $X_s$ is hereditarily paracompact, and scattered with exactly one point of scattered height  $\alpha+1$ in $X_s$ and in $X$.
If there is a partial order $\preceq_s$ on each $X_s$ as in the claim, then their union, $\preceq_X$, is as claimed for $X$.

So we can suppose $X^{(\alpha+1)}$ has exactly one point $\ast$. 
Let $\mathcal{W}=\{W_x : x \in X \setminus \{\ast\}$. This is an open cover of $X\setminus \{\ast\}$ by sets each with scattered height strictly less than $\alpha+1$. 
As $X$ is hereditarily ultraparacompact there is a clopen partition of $X\setminus \{\ast\}$, say $X\setminus\{\ast\}=\bigoplus \{X_s : s \in S\}$, where each $X_s$ is hereditarily paracompact, and scattered of height strictly less than $X$.
Inductively, then, there are partial orders, $\preceq_{X_s}=\preceq_s$ on each $X_s$ as in the claim (down-sets open and contained in the canonical neighborhood,  up-sets finite).
Define $\preceq_X$ to be the disjoint union of all the $\preceq_s$ and then set $\ast$ to be the $\preceq_X$ maximum ($x \preceq_X \ast$ for all $x$  in $X$).
This is a partial order. Down-sets are open and contained in the canonical neighborhood.
And  up-sets are finite, indeed for an $x$ in $X_s$, the  up-set in $(X,\preceq_X)$ of $x$ has exactly one more element, namely $\ast$, than the  up-set of $x$ in $(X_s,\preceq_s)$, which is finite.
Thus $\preceq_X$ is as required.
\end{proof}

\begin{thr}\label{th:hpcpt_sc}
Let $X$ be hereditarily paracompact and scattered. Then $\square X^\om$ and $\nabla X^\om$ are hereditarily D.
\end{thr}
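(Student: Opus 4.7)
The plan is to lift the partial order $\preceq_X$ produced by the preceding lemma to partial orders on $\square X^\om$ and $\nabla X^\om$ and then apply Proposition~\ref{pr:order_implies_hD}(2) in each case. All the work is in showing that up-sets under these lifted orders are hereditarily $D$; I will argue that they are in fact discrete, using the finiteness (and hence, since $X$ is Hausdorff, the discreteness as a subspace) of each $\up{x_n}$ in $X$.

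For $\square X^\om$, define $\bar x \preceq \bar y \iff x_n \preceq_X y_n$ for every $n$. The down-set $\down{\bar x} = \prod_n \down{x_n}$ is a basic open box (since each $\down{x_n}$ is open in $X$), so $\preceq$ is topological. The up-set $\up{\bar x} = \prod_n \up{x_n}$ is a countable box product of finite discrete subspaces of $X$. Given $\bar y \in \up{\bar x}$, pick opens $U_n$ in $X$ with $U_n \cap \up{x_n} = \{y_n\}$; then $(\prod_n U_n) \cap \up{\bar x} = \{\bar y\}$, so $\up{\bar x}$ is discrete, and Proposition~\ref{pr:order_implies_hD}(2) gives that $\square X^\om$ is hereditarily $D$.

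For $\nabla X^\om$, define $[\bar x] \preceq [\bar y] \iff x_n \preceq_X y_n$ for almost all $n$; this is well defined mod $=^*$. The $q$-preimage of $\down{[\bar x]}$ is $\bigcup_{N \in \om} X^N \times \prod_{n \ge N} \down{x_n}$, a union of basic open boxes, so $\down{[\bar x]}$ is open in $\nabla X^\om$ and the order is topological. For the up-set of $[\bar x]$, take any $[\bar y] \in \up{[\bar x]}$, choose a representative with $y_n \in \up{x_n}$ for every $n$, and pick the same opens $U_n$ as above. Then $W = q(\prod_n U_n)$ is a basic open neighborhood of $[\bar y]$ in $\nabla X^\om$, and any $[\bar z] \in W \cap \up{[\bar x]}$ satisfies both $z_n \in \up{x_n}$ for almost all $n$ and $z_n \in U_n$ for almost all $n$, whence $z_n \in U_n \cap \up{x_n} = \{y_n\}$ almost always, i.e. $\bar z =^* \bar y$. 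Thus $W \cap \up{[\bar x]} = \{[\bar y]\}$, so $\up{[\bar x]}$ is discrete, and Proposition~\ref{pr:order_implies_hD}(2) gives that $\nabla X^\om$ is hereditarily $D$.

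The main obstacle is confirming that the mod-finite lift to $\nabla X^\om$ still behaves correctly as a topological order and that its up-sets really are discrete in the quotient topology — a priori the quotient could fuse distinct points of $\prod_n \up{x_n}$ into non-isolated classes. The observation that unlocks both points is that a down-set is an $=^*$-saturated union of basic open boxes, and that finiteness of each $\up{x_n}$ lets a single open box $\prod_n U_n$ pin $y_n$ inside $\up{x_n}$ almost everywhere, which is enough to make the $=^*$-class of $\bar y$ genuinely isolated inside the up-set.
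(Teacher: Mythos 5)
Your proposal is correct and follows essentially the same route as the paper: lift $\preceq_X$ pointwise over $\square X^\om$ and pointwise mod finite over $\nabla X^\om$, observe that down-sets are open (so the orders are topological) and that up-sets are discrete because they are countable box/nabla products of finite discrete sets, then invoke Proposition~\ref{pr:order_implies_hD}. Your explicit verification that the up-sets in $\nabla X^\om$ remain discrete in the quotient topology is a detail the paper compresses into one sentence, but the argument is the same.
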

\begin{proof}
Fix a partial order $\preceq$ on $X$ as in the preceding lemma (down-sets open and contained in the canonical neighborhood,  up-sets finite). 
Extend over $\square X^\om$ pointwise and over $\nabla X^\om$ pointwise mod finite.

The box product of open sets is open, and $\down{x} = \square_n \down{x(n)}$, so down-sets for $\preceq$ on $\square X^\om$ are open, and hence $\preceq$ is topological.
Similarly, down-sets for $\preceq$ on $\nabla X^\om$ are open, and again $\preceq$ is topological. 
The box product   of finite (hence discrete) sets is discrete, and $\up{x} = \square_n \up{x(n)}$, so  up-sets for $\preceq$ on $\square X^\om$ are  discrete.
Similarly,  up-sets for $\preceq$ on $\nabla X^\om$ are discrete.  
Hence the claim follows from Proposition~\ref{pr:order_implies_hD}.
\end{proof}

\section{Scattered with Finite or Countable Height}\label{sec:fsct}

\subsection{Bounded Finite Scattered Height}

In \cite{peng2008hjm} Peng showed that the box product of scattered spaces with bounded finite scattered height is $D$. 
For the same spaces, using a simple and natural application of our `topological order' machinery, we show that the box and nabla products are hereditarily $D$. 

Let $X$ be scattered. 
As a trivial, but useful observation, let us note: if we add one isolated point to $X$, to get $X^*$, then $X$ and $X^*$ have the same scattered height.
We can put a partial  order, $\preceq$, on $X$ by $x \preceq y$ if and only if $x=y$ or $ht(x,X) < ht(y,X)$. Observe that down-sets in this order are open (they are, indeed, the canonical neighborhoods). 
Extend $\preceq$ over $\square X^\om$ pointwise, and over $\nabla X^\om$ pointwise mod finite. Again observe that down-sets are open, and hence these partial orders are topological.

\begin{prop} \label{pr:bd_finite_ht}
Let $(X_n)_n$ be scattered spaces with scattered height bounded by finite $m$. Then $\square_n X_n$ and $\nabla_n X_n$ are hereditarily D.
\end{prop}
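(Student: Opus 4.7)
My plan is to prove the proposition by induction on $m$, applying Proposition~\ref{pr:order_implies_hD} to the topological partial order $\preceq$ (extended to $\square_n X_n$ pointwise and to $\nabla_n X_n$ pointwise mod-finite, as the authors have just arranged). For the base case $m=0$, each $X_n$ is discrete, $\preceq$ is equality, every up-set in either product is a singleton, and Proposition~\ref{pr:order_implies_hD} concludes immediately.

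For the inductive step, assume the result for $m-1$ and let each $X_n$ have scattered height at most $m$. I would first observe that for any $x(n)\in X_n$, the subspace $\up{x(n)}=\{x(n)\}\sqcup X_n^{(\mathop{ht}(x(n),X_n)+1)}$ of $X_n$ is scattered of height at most $m-1$: the canonical neighborhood $W_{x(n)}=\down{x(n)}$ meets $\up{x(n)}$ only in $\{x(n)\}$, so $x(n)$ is isolated in the subspace, while the remainder is a closed subspace of $X_n$ whose derived sets agree with those of $X_n$ shifted, giving it scattered height $m-\mathop{ht}(x(n),X_n)-1\le m-1$.

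Next I would identify up-sets in the products with products of up-sets. For the box this is immediate: as a subspace of $\square_n X_n$, $\up{x}$ is literally $\square_n \up{x(n)}$. For the nabla, I would show $\up{[x]}\cong\nabla_n\up{x(n)}$ by noting that every $[y]\in\up{[x]}$ has a representative in $\prod_n \up{x(n)}$ (adjust $y$ on the finitely many coordinates where $y(n)\notin\up{x(n)}$ by setting them to $x(n)$), and that both the subspace topology from $\nabla_n X_n$ and the quotient topology from $\square_n\up{x(n)}$ have the common basic open family $\{[y] : y(n)\in U_n\cap\up{x(n)}\text{ for a.e.\ }n\}$ as $U_n$ ranges over open sets of $X_n$. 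The inductive hypothesis applied to the $\up{x(n)}$'s then yields that $\up{x}$ and $\up{[x]}$ are hereditarily $D$; Proposition~\ref{pr:order_implies_hD} completes the inductive step.

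The induction and the box case are routine; the main point requiring a moment's care will be verifying the nabla subspace identification $\up{[x]}\cong \nabla_n\up{x(n)}$, which comes down to the short basic-open computation sketched above.
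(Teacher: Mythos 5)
Your proposal is correct and follows essentially the same route as the paper: induction on the bound on scattered height, the coordinatewise (mod-finite for nabla) topological order, Proposition~\ref{pr:order_implies_hD}, and the observation that each $\up{x(n)}$ is an isolated point adjoined to a derived set and hence has strictly smaller height. The only cosmetic differences are that the paper first replaces the sequence by the single space $\bigoplus_n X_n$ and describes $\up{x(n)}$ via $X\setminus \mathop{I}(X)$ rather than via $X_n^{(\mathop{ht}(x(n),X_n)+1)}$, while you carry out the nabla subspace identification $\up{[x]}\cong\nabla_n\up{x(n)}$ in slightly more detail than the paper does.
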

\begin{proof} Let $X=\bigoplus_n X_n$. Then $X$ has finite scattered height no more than $m$. It suffices to show $\square X^\om$ and $\nabla X^\om$ are hereditarily $D$. 
\medskip

We start with the box product.
We prove this by induction on the scattered height of $X$. If $X$ is discrete then the claim is trivial. So assume all box products of scattered spaces of scattered height strictly less than that of $X$ are hereditarily $D$.

As $\preceq$ is topological, it suffices, by Proposition~\ref{pr:order_implies_hD}, to verify that all  up-sets are hereditarily $D$.
To this end, take any $x$ in $\square X^\om$. Observe that $\up{x} = \square_n \up{x(n)}$.  
If $x(n)$ is isolated then $\up{x(n)} = \{x(n)\} \cup X'$ (where $X'=X \setminus \mathop{I}(X)$). 
And as  noted above, $\up{x(n)}$ has the same scattered height as $X'$. Otherwise, $x(n)$ being a limit point implies $\up{x(n)}$ is contained  in $X'$.
\emph{Either way} the scattered height of $\up{x(n)}$ is no more than that of $X'$, and so strictly less than the scattered height of $X$ (use finite scattered height, here).
Hence by the inductive hypothesis, $\up{x}$ is hereditarily $D$. 
\medskip

The proof for the nabla product is almost identical.
Note that $\up{x} = \nabla_n {\up{x(n)}}$, so the same argument as above shows that the    up-sets are hereditarily $D$.
\end{proof}

\subsection{Unbounded Finite, and Countable Scattered Height}

The result above, Proposition~\ref{pr:bd_finite_ht}, does not extend to nabla products of space of countable, or even, finite (but \emph{unbounded}) scattered height.

\begin{exa} There is a space $X$ of countable scattered height such that $\nabla X^\om$ is not $D$. 
There is a sequence $(X_n)_n$ of spaces where $X_n$ is scattered of height $n$, but $\nabla_n X_n$ is not $D$.
\end{exa}
\begin{proof} 
By Lemma~\ref{l:embed_in_nabla}, it suffices to construct a $P$-space $X$ which is the increasing union of subspaces $X_n$ (so $X_\delta=X$ and $X$ embeds as a closed subspace in $\nabla_n X_n$), where each $X_n$ is scattered of height $n$, but $X$ is not $D$.

The space $X$ is obtained by invoking the machine from Theorem~\ref{thr:ex_machine} to the `base' space,  $B$ which is $\{0,1\}^{\beth_{\omega_1}}$ with $<\beth_{\omega_1}$ boxes (recall the definition of $\beth_{\lambda +1} = |\mathcal{P}(\beth_\lambda)|$, and $\beth_\lambda = \sup \{ \beth_\alpha : \alpha < \lambda\}$ for $\lambda$ limit).
In other words, basic neighborhoods of $x$ in $B$ have the form $N(x,\alpha)=\{ y \in B : y \restriction \beth_\alpha = x \restriction \beth_\alpha\}$.
Then $B$ is $\om_1$-metrizable, but not metrizable. It has cardinality $2^{\beth_{\omega_1}} = \beth_{\om_1+1}$ but, critically, has weight $\beth_{\om_1}$.
Hence, $2^{w(B)} \le |B|$, as required. 
\end{proof}

The situation with box products is not quite so clear cut. If $Y$ is any scattered space of countable height which is not $D$ -- for example, the space $X$ of the previous example, but also the space $\Gamma$ from \cite{van1977another} which inspired $X$ -- then $\square Y^\om$ is not $D$. However the authors do not know the answer to:

\begin{ques}
Is there a box product of spaces of finite (unbounded) scattered height which is not  $D$?
\end{ques}


\section{Small Weight and Character; Metrizable}

\subsection{Small weight and character}\label{ssec:small}
Under $\mathfrak{d}=\mathfrak{c}$  and the Model Hypothesis (MH${}_\mathfrak{c}$, stated below), the nabla and box products of compact, first countable spaces are paracompact (see \cite{roitman2011paracompactness, roitman2015paracompactness}). 
We now show that in both cases the nabla and box products are $D$.
The box product result follows immediately from Theorem~\ref{th:boxD_iff_nablaD} and the corresponding nabla result. 
While the nabla theorems are a consequence of the set theoretic hypotheses implying a certain structure on the nabla product, which -- the next general result says -- implies the hereditary $D$ property.

\begin{lemma}\label{l:inc_union_discrete_is_D}
Let $Y$ be the union of subspaces $Y_\alpha$, $Y_\alpha \subseteq Y_{\alpha +1}$, for $\alpha < \lambda$. Suppose that for each $y$ in $Y_\alpha \setminus \bigcup_{\beta<\alpha} Y_\beta$, there is an open set $W_y$ such that $W_y \cap Y_\alpha = \{y\}$.
Then $Y$ is hereditarily $D$.

\end{lemma}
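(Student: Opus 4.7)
The plan is to first observe that the hypothesis is hereditary, reducing the problem to showing $Y$ itself is $D$: any subspace $Z \subseteq Y$ inherits the same filtration via $Z_\alpha = Z \cap Y_\alpha$, and a point $y \in Z_\alpha \setminus \bigcup_{\beta<\alpha} Z_\beta$ necessarily has level $\alpha$ in $Y$ as well, so $W_y \cap Z$ witnesses the hypothesis in $Z$. Setting $\alpha(y) = \min\{\gamma : y \in Y_\gamma\}$, one immediately deduces two structural facts from $W_y \cap Y_{\alpha(y)} = \{y\}$: each $Y_\alpha$ is closed in $Y$ (if $\alpha(z) > \alpha$ then $W_z \cap Y_\alpha \subseteq W_z \cap Y_{\alpha(z)} = \{z\}$ forces $W_z \cap Y_\alpha = \emptyset$ since $z \notin Y_\alpha$); and $Y_0$ is closed discrete in $Y$.

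Given a neighbornet $U$ on $Y$, the main construction is a transfinite recursion on $\alpha < \lambda$ producing increasing sets $D_\alpha \subseteq Y_\alpha$ which are closed discrete in $Y$ with $U(D_\alpha) \supseteq Y_\alpha$. Start with $D_0 = Y_0$; at successors set $D_{\alpha+1} = D_\alpha \cup (S_{\alpha+1} \setminus U(D_\alpha))$ where $S_{\alpha+1} = Y_{\alpha+1} \setminus Y_\alpha$; at limits take $D_{\lambda'} = \bigcup_{\beta<\lambda'} D_\beta$. Taking $D = \bigcup_{\alpha<\lambda} D_\alpha$ then shows $Y$ is $D$, completing the proof.

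The main obstacle is verifying that each $D_\alpha$ is closed discrete in $Y$; the covering $U(D_\alpha) \supseteq Y_\alpha$ follows immediately from the construction. The key observation is that any point $y$ added at stage $\beta+1$ satisfies $y \notin U(D_\beta)$, so $y \notin U(z)$ for every $z \in D_\beta$. This lets me verify closed-discreteness by cases: if $z \in D_\alpha$ first enters at stage $\beta_0$, intersecting a neighborhood $V$ witnessing $D_{\beta_0}$ closed discrete at $z$ (via the inductive hypothesis) with $U(z)$ yields a neighborhood of $z$ meeting $D_\alpha$ only at $\{z\}$, since $V$ controls the $D_{\beta_0}$-part and $U(z)$ excludes all later additions. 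For $z \notin D_\alpha$ with $\alpha(z) > \alpha$, the neighborhood $W_z$ misses $D_\alpha \subseteq Y_\alpha \subseteq Y_{\alpha(z)}$ directly. For $z \in Y_\alpha \setminus D_\alpha$, pick $d \in D_{\alpha(z)}$ with $z \in U(d)$ (using the inductive covering), and a neighborhood $V$ of $z$ disjoint from the closed set $D_{\alpha(z)}$; then $V \cap U(d)$ is a neighborhood of $z$ disjoint from $D_\alpha$, as $V$ kills $D_{\alpha(z)}$ and $U(d)$ kills all later additions.
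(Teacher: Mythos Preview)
Your approach is the same as the paper's, but there is a genuine gap at limit ordinals. In your recursion you set $D_{\lambda'}=\bigcup_{\beta<\lambda'}D_\beta$ at limits $\lambda'$, adding nothing new. However the hypothesis does \emph{not} say $Y_{\lambda'}=\bigcup_{\beta<\lambda'}Y_\beta$ (and in both applications of the lemma this fails). Hence a point $z$ with $\alpha(z)=\lambda'$ a limit lies in $Y_{\lambda'}\setminus\bigcup_{\beta<\lambda'}Y_\beta$; it is not in any $S_{\gamma+1}=Y_{\gamma+1}\setminus Y_\gamma$ with $\gamma+1>\lambda'$, so it is never a candidate to be added to $D$, and there is no reason it lies in $U(D_{\lambda'})=\bigcup_{\beta<\lambda'}U(D_\beta)$. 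Thus your claim ``$U(D_\alpha)\supseteq Y_\alpha$ follows immediately from the construction'' breaks at limits, $U(D)$ need not cover $Y$, and your Case~3 argument (``pick $d\in D_{\alpha(z)}$ with $z\in U(d)$'') is unjustified precisely when $\alpha(z)$ is a limit.

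The fix is exactly what the paper does: run a single uniform recursion
\[
D_\alpha=\Bigl(\bigcup_{\beta<\alpha}D_\beta\Bigr)\cup\Bigl(Y_\alpha\setminus U\bigl(\textstyle\bigcup_{\beta<\alpha}D_\beta\bigr)\Bigr)
\]
for \emph{all} $\alpha$, so that at a limit $\lambda'$ the points of $Y_{\lambda'}$ not yet covered are adjoined. With that change your closed-discreteness verification goes through unchanged (note that any newly added $z$ at stage $\gamma$ has $\alpha(z)=\gamma$, so $W_z\cap D_\gamma\subseteq W_z\cap Y_\gamma=\{z\}$, which you can use directly rather than invoking the inductive hypothesis for $D_{\beta_0}$ when $\beta_0=\alpha$). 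The paper additionally shrinks $U$ so that $U(y)\subseteq W_y$, which streamlines the discreteness check, but your more detailed case analysis without that refinement is fine once the limit step is repaired.
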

\begin{proof}
First let us note that the hypothesis imply that each $Y_\alpha$ is closed in $Y$.

Now, clearly any subspace $Y'$ of $Y$ has the same structure: let $Y'_\alpha=Y_\alpha \cap Y'$ and $W'_y = W_y \cap Y'$. So to show $Y$ is hereditarily $D$ it suffices to show it is $D$.

Take any neighbornet $U$ on $Y$ such that for every $y$ in $Y$ we have $U(y) \subseteq W_y$. 
Then let $D_0=Y_0$ and $U_0=U(D_0)$. Note $D_0$ is closed discrete, while $U_0$ is open and contains $Y_0$.
Recursively, let $D_\alpha = (\bigcup_{\beta < \alpha} D_\beta) \cup (Y_\alpha \setminus \bigcup_{\beta <\alpha} U_\beta)$ and $U_\alpha= \bigcup_{\beta <\alpha} U_\beta \cup U(D_\alpha)$. 
It is easy to check that $D_\alpha$ is closed discrete and $U_\alpha$ is open and contains $Y_\alpha$. Then $D=\bigcup_{\alpha < \lambda} D_\alpha$ is closed discrete, and $U(D)=Y$.
\end{proof}

\begin{prop} Let $\nabla=\nabla_n X_n$ be a nabla product of first countable spaces, $X_n$.
Every subspace of $\nabla$ of size no more than $\mathfrak{d}$ is hereditarily $D$.
Hence, if $\mathfrak{d}=\mathfrak{c}$ and each $X_n$ has size no more than $\mathfrak{c}$, then $\nabla$ is hereditarily $D$. So, if $\mathfrak{d}=\mathfrak{c}$ then the box product of compact, first countable spaces is $D$.
\end{prop}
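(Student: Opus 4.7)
My plan is to apply Lemma~\ref{l:inc_union_discrete_is_D} to each subspace $Y \subseteq \nabla := \nabla_n X_n$ with $|Y| \le \mathfrak{d}$; this reduces the problem to writing $Y$ as an increasing union $\bigcup_{\alpha<\lambda} Y_\alpha$, for some ordinal $\lambda \le \mathfrak{d}$, such that every newly added point $y \in Y_\alpha \setminus \bigcup_{\beta<\alpha} Y_\beta$ admits a $\nabla$-open neighborhood $W_y$ meeting $Y_\alpha$ only at $y$. To build such $W_y$'s, I would fix a $\leq^*$-dominating family $\{d_\xi : \xi < \mathfrak{d}\}$ in $\omega^\omega$; for each $y \in Y$ fix a representative $x_y \in \prod_n X_n$; and, using first countability of each $X_n$, fix a decreasing countable local base $\{B^y_n(k) : k \in \omega\}$ at $x_y(n)$. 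For $g \in \omega^\omega$, put
\[
V^y(g) = \{[f] \in \nabla : f(n) \in B^y_n(g(n)) \text{ for cofinitely many } n\},
\]
which is a $\nabla$-open neighborhood of $y$; since every $g \in \omega^\omega$ is $\leq^*$-dominated by some $d_\xi$, the family $\{V^y(d_\xi) : \xi < \mathfrak{d}\}$ is a neighborhood base at $y$.

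For distinct $y, z \in Y$ the representatives disagree on infinitely many coordinates, and $T_1$-ness of each $X_n$ lets one construct $\tilde h_{y,z} \in \omega^\omega$ so that any $g \ge^* \tilde h_{y,z}$ forces $z \notin V^y(g)$; in particular there is $\xi(y,z) < \mathfrak{d}$ with $V^y(d_{\xi(y,z)}) \not\ni z$. Enumerating $Y = \{y_\alpha : \alpha < \mu\}$ with $\mu \le \mathfrak{d}$, and setting $Y_\alpha = \{y_\beta : \beta \le \alpha\}$, the plan is then to select, at each stage $\alpha$, a function $g_\alpha \in \omega^\omega$ with $g_\alpha \ge^* \tilde h_{y_\alpha, y_\beta}$ for every $\beta < \alpha$; then $W_{y_\alpha} := V^{y_\alpha}(g_\alpha)$ excludes all earlier points, verifying the hypothesis of Lemma~\ref{l:inc_union_discrete_is_D}.

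The main technical difficulty is producing such $g_\alpha$ once $|\alpha|$ exceeds the bounding number $\mathfrak{b}$, since families of size $\ge \mathfrak{b}$ in $(\omega^\omega, \leq^*)$ need not be bounded; the case $\mu < \mathfrak{b}$ is immediate, so the substance lies in $\mathfrak{b} \le \mu \le \mathfrak{d}$. My plan to handle this is to choose the enumeration of $Y$ in tandem with the dominating family, picking $y_\alpha$ at stage $\alpha$ so that $d_\alpha$ itself $\leq^*$-dominates each $\tilde h_{y_\alpha, y_\beta}$ for $\beta < \alpha$ (using the size constraint $|Y| \le \mathfrak{d}$ to guarantee such a choice remains available), or alternatively to add batches of points per level indexed by the dominating family while preserving the isolation condition. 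Granted the subspace result, the two corollaries follow at once: $\mathfrak{d} = \mathfrak{c}$ together with $|X_n| \le \mathfrak{c}$ yields $|\nabla| \le \mathfrak{c}^\omega = \mathfrak{c} = \mathfrak{d}$, so $\nabla$ is itself a qualifying subspace; and for compact first countable $X$, Arhangel'skii's theorem gives $|X| \le 2^{\chi(X)} = \mathfrak{c}$, so $\nabla X^\omega$ is hereditarily $D$ under $\mathfrak{d} = \mathfrak{c}$, and Theorem~\ref{th:boxD_iff_nablaD} transfers the $D$-property to $\square X^\omega$.
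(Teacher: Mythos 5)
Your overall architecture is the same as the paper's: enumerate $Y=\{y_\alpha:\alpha<\mu\}$, set $Y_\alpha=\{y_\beta:\beta\le\alpha\}$, and feed this into Lemma~\ref{l:inc_union_discrete_is_D}. The gap is in how you isolate $y_\alpha$ from its predecessors. You require $g_\alpha\ge^* \tilde h_{y_\alpha,y_\beta}$ for all $\beta<\alpha$, i.e.\ a single function eventually dominating a family of size $|\alpha|$, and as you yourself note this is governed by $\mathfrak{b}$, not $\mathfrak{d}$. Neither of your proposed repairs closes this. Choosing the enumeration ``in tandem with the dominating family'' fails because at stage $\alpha$ the set of predecessors is already fixed and there is no reason any not-yet-listed point $y$ has all of $\tilde h_{y,y_0},\dots$ dominated by $d_\alpha$; moreover every point of $Y$ must eventually be listed, so you cannot indefinitely postpone the bad ones. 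The ``batches per level'' alternative is not an argument.

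The missing idea is that eventual domination is far more than you need. To get $y_\beta\notin V^{y_\alpha}(g)$ it suffices that $g(n)\ge \tilde h_{y_\alpha,y_\beta}(n)$ for \emph{infinitely many} $n$ in the (infinite) disagreement set $A_{y_\alpha,y_\beta}=\{n: x_{y_\alpha}(n)\ne x_{y_\beta}(n)\}$. Reparametrizing each $\tilde h_{y_\alpha,y_\beta}$ along an increasing enumeration of $A_{y_\alpha,y_\beta}$, the family of fewer than $\mathfrak{d}$ resulting functions is not dominating, and any witness to non-domination converts back into a single $g_\alpha$ meeting all the constraints at once; the relevant invariant is therefore $\mathfrak{d}$ after all. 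This is exactly the content of the fact that every subset of a nabla product of first countable spaces of size $<\mathfrak{d}$ is closed (and discrete) --- Roitman's Proposition~5.2.4, which this paper invokes elsewhere (see the proof of Lemma~\ref{l:hyp3}). The paper's own proof simply cites that fact and takes $W_{y_\alpha}=Y\setminus\{y_\beta:\beta<\alpha\}$, which is then open and meets $Y_\alpha$ only in $y_\alpha$. Your treatment of the two corollaries ($|\nabla|\le\mathfrak{c}^\omega=\mathfrak{c}=\mathfrak{d}$, Arhangel'skii for compact first countable spaces, and Theorem~\ref{th:boxD_iff_nablaD}) is correct and matches the paper.
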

\begin{proof}
Take any subspace $Y$ of $\nabla$ with $|Y| \le \mathfrak{d}$, and enumerate it, $Y=\{y_\alpha : \alpha < \mathfrak{d}\}$. 
Let $Y_\alpha=\{y_\beta : \beta \le \alpha\}$.
Since every subset of $\nabla$ of size $<\mathfrak{d}$ is closed, for each $y=y_\alpha$, the set $W_y=\{ y_\beta : \beta \ge \alpha\}$ is open. Now apply Lemma~\ref{l:inc_union_discrete_is_D}. 
\end{proof}

\begin{de}[Roitman \cite{roitman2011paracompactness}]\label{MH}
The \emph{Model Hypothesis}, abbreviated $\mbox{MH}_\mathfrak{c}$, is the following statement: For some $\kappa$, $H(\mathfrak{c})$ is the increasing union of $H_\alpha$'s, for $\alpha < \kappa$, where each $H_\alpha$ is an elementary submodel of $(H(\mathfrak{c}), \in )$, and each $H_\alpha \cap \baire$ is not $\leq^*$-cofinal.
\end{de}

The statement $\mbox{MH}_\mathfrak{c}$ was implicitly defined in \cite{roitman1979more}. Roitman showed there that $\mbox{MH}_\mathfrak{c}$ follows from $\mathfrak{d=c}$ (hence from Martin's Axiom). It also holds in any forcing extension by uncountably many Cohen reals over a model of $\mbox{ZFC}$.

\begin{prop}
If the Model Hypothesis holds then the nabla product of first countable spaces of size $\le \mathfrak{c}$ is hereditarily $D$, and so the box product of compact, first countable spaces is $D$.
\end{prop}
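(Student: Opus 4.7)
The plan is to apply Lemma~\ref{l:inc_union_discrete_is_D} with the chain
\[ Y_\alpha \ := \ \{[f] \in \nabla_n X_n : [f] \text{ has a representative in } H_\alpha\}, \]
where $\{H_\alpha : \alpha < \kappa\}$ is the chain of elementary submodels supplied by $\mbox{MH}_\mathfrak{c}$. I would assume without loss that each $X_n$, together with a fixed countable nested local base $\{V_n^k(x) : k \in \omega\}$ at every $x\in X_n$, already lies in $H_0$. The hypothesis $|X_n| \leq \mathfrak{c}$ gives $\prod_n X_n \subseteq H(\mathfrak{c}) = \bigcup_\alpha H_\alpha$, so, writing $\nabla := \nabla_n X_n$, we have $\nabla = \bigcup_\alpha Y_\alpha$ with $Y_\alpha \subseteq Y_{\alpha+1}$, and the setup of the lemma is in place.

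The heart of the argument will be, for each $[f] \in Y_\alpha \setminus \bigcup_{\beta<\alpha} Y_\beta$ with a fixed representative $f \in H_\alpha$, to produce an open $W_{[f]} \ni [f]$ with $W_{[f]} \cap Y_\alpha = \{[f]\}$. A basic open neighbourhood of $[f]$ has the form $\nabla_n V_n^{g(n)}(f(n))$ for some $g \in \baire$. Writing $e_{f,f'}(n)$ for the least $k$ with $f'(n) \notin V_n^k(f(n))$ (and $\infty$ when $f(n)=f'(n)$), the task reduces to producing a single $g \in \baire$ such that for every $f' \in H_\alpha$ representing an $[f'] \ne [f]$ the set $\{n : g(n) \geq e_{f,f'}(n)\}$ is infinite. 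The MH supplies the natural candidate: pick $\hat g \in \baire$ not $\leq^*$-dominated by any element of $H_\alpha \cap \baire$, and set $W_{[f]} = \nabla_n V_n^{\hat g(n)}(f(n))$. Each $e_{f,f'}$ and each $B_{f'} := \{n : f(n) \ne f'(n)\}$ lies in $H_\alpha$; elementarity together with the $H_\alpha$-enumeration of $B_{f'}$ transfers non-cofinality of $H_\alpha \cap \baire$ to non-cofinality of $H_\alpha \cap \omega^{B_{f'}}$ in $(\omega^{B_{f'}}, \leq^*_{B_{f'}})$.

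The main obstacle is that this transfer, a priori, supplies only a $B_{f'}$-dependent witness, whereas one $\hat g$ must work against every $f' \in H_\alpha$ simultaneously. The way I would overcome this is to strengthen the choice of $\hat g$: I would take $\hat g \in H_{\alpha+1} \cap \baire$ enjoying the property that for every infinite $B \in H_\alpha$ and every $h \in H_\alpha \cap \baire$, the set $\{n \in B : \hat g(n) > h(n)\}$ is infinite. Such a $\hat g$ can be extracted by a second application of $\mbox{MH}_\mathfrak{c}$ inside $H_{\alpha+1}$ (noting $H_\alpha \in H_{\alpha+1}$); alternatively, one can refine the MH-chain to one of countable elementary submodels of $H(\mathfrak{c})$, for which the strengthened property is automatic by diagonalising against the countably many pairs $(B,h)$. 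With this $\hat g$ in hand the verification is immediate: if one had $[f'] \in W_{[f]}$, then $\hat g(n) < e_{f,f'}(n)$ on a cofinite subset of $B_{f'}$, but the $H_\alpha$-function $h(n) = e_{f,f'}(n)$ for $n \in B_{f'}$ and $h(n)=0$ otherwise then contradicts the strengthening applied to $B = B_{f'}$.

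Once $W_{[f]}$ is in hand, Lemma~\ref{l:inc_union_discrete_is_D} applied to $\nabla$ yields that $\nabla$ is $D$, and applied to an arbitrary subspace $Z$ via $Z_\alpha := Z \cap Y_\alpha$ yields that $Z$ is $D$, so $\nabla$ is hereditarily $D$. The box product consequence is then immediate from Theorem~\ref{th:boxD_iff_nablaD}: a compact first countable $X$ satisfies $|X| \leq \mathfrak{c}$ (by Arhangelskii's theorem), so $\nabla X^\om$ is $D$, whence $\square X^\om$ is $D$.
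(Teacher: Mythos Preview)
Your overall plan---decompose $\nabla$ along the $\mbox{MH}_\mathfrak{c}$ chain and feed the pieces into Lemma~\ref{l:inc_union_discrete_is_D}---is exactly what the paper does, and your deduction of the box-product statement via Theorem~\ref{th:boxD_iff_nablaD} and Arhangel'skii's theorem is correct. The paper, however, does not attempt to construct the isolating neighbourhoods $W_{[f]}$ by hand; it simply invokes Roitman's Proposition~6.5 in \cite{roitman2011paracompactness}, which gives that each $\nabla_\alpha = \nabla \cap H_\alpha$ is strongly separated (and closed in the complement of the earlier pieces), and then reads off the required $W_z$.

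Your direct construction of $W_{[f]}$ has a real gap precisely at the step you flag as the ``main obstacle'', and neither of your proposed fixes is licensed by $\mbox{MH}_\mathfrak{c}$ as stated in Definition~\ref{MH}. First, nothing in $\mbox{MH}_\mathfrak{c}$ gives $H_\alpha \in H_{\alpha+1}$; indeed the $H_\alpha$'s may well have cardinality $\mathfrak{c}$, in which case $H_\alpha$ is not even an element of $H(\mathfrak{c})$, hence certainly not of $H_{\alpha+1}$. Second, an \emph{increasing} chain of countable sets has length at most $\omega_1$ (once $\omega_1$ many have been accumulated, any later member of the chain must contain their uncountable union), so ``refining to countable elementary submodels'' can only cover $H(\mathfrak{c})$ when $|H(\mathfrak{c})|\le\aleph_1$---essentially under CH, which is not the regime of interest for $\mbox{MH}_\mathfrak{c}$. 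Without one of these strengthenings you are left with a $\hat g$ that is merely undominated by $H_\alpha\cap\baire$ on all of $\omega$; as you yourself observe, such a $\hat g$ can still be $\le^*$-dominated on a particular $B_{f'}\in H_\alpha$, and then $[f']$ lands in $W_{[f]}$. A correct argument here requires a more careful use of elementarity; that is what Roitman supplies, and you should either reproduce her argument or, as the paper does, cite it.
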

\begin{proof} Let $(X_n)_n$ be first countable spaces, all of cardinality no more than the continuum, $\mathfrak{c}$. Let $\nabla=\nabla_n X_n$.
We assume the Model Hypothesis holds. 
Note that $H(\mathfrak{c})$ contains any first countable space of size no more than $\mathfrak{c}$, hence it contains $\nabla$.

Let $\nabla_\alpha=\nabla \cap H_\alpha$, for $\alpha < \kappa$. So $\nabla$ is the increasing union of the $\nabla_\alpha$'s. 
For any $z$ in $\nabla$ define $\alpha(z)$ to be the minimal $\alpha$ such that $z$ is in $\nabla_\alpha$. 
Then in Proposition~6.5 of \cite{roitman2011paracompactness} Roitman shows that each $\nabla_\alpha$ is (1) `strongly separated' (there is a discrete open collection $U = \{u_y : y \in \nabla_\alpha \}$ with each $y \in u_y$ and if $y \neq y'$ then $u_y = u_y'$) and (2) closed in $\nabla \setminus \bigcup_{\beta < \alpha} \nabla_\beta$. (More precisely she shows this if each first countable, $X_n$, is compact. But her only use of compactness is to deduce that the size of $X_n$ is no more than $\mathfrak{c}$.)
But from (1) and (2) we have that for every $z$ in $\nabla$ there is an open set $W_z$  such that $W_z \cap \nabla_{\alpha(z)} =\{z\}$. 
Thus Lemma~\ref{l:inc_union_discrete_is_D} applies, and $\nabla$ is hereditarily $D$.
\end{proof}

Consistently, we can remove the restriction to first countable spaces, provided we demand the weight is no more than $\omega_1$.
Indeed, in \cite{williams1984box} Williams showed under $\mathfrak{d}=\om_1$, that every $\nabla X^\om$, where $X$ is compact and of weight $\le \om_1$, is $\om_1$-metrizable, and hence hereditarily $D$. 
Applying Theorem~\ref{th:boxD_iff_nablaD} and recalling that every space of weight $\le \om_1$ embeds in a compact space of the same weight, we deduce:
\begin{thr} Assuming $\mathfrak{d}=\om_1$, if $X$ has weight $\le \om_1$ then $\nabla X^\om$ is  hereditarily $D$, and if in addition $X$ is compact, then $\square X^\om$ is $D$.
\end{thr}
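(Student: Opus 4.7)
The plan is a bootstrap from Williams' cited theorem. I would first handle compact $X$ with $w(X) \le \om_1$: under $\mathfrak{d} = \om_1$, Williams gives that $\nabla X^\om$ is $\om_1$-metrizable, and I would then prove as a sublemma that every $\om_1$-metrizable space $Y$ is hereditarily $D$. The key observation is that conditions (i) and (ii) in the $\kappa$-metrizable definition force the symmetry $z \in N(y,\alpha) \iff y \in N(z,\alpha)$: if $z \in N(y,\alpha)$ but $y \notin N(z,\alpha)$, condition (ii) would give $N(y,\alpha) \cap N(z,\alpha) = \emptyset$, contradicting that both contain $z$. Hence each $\mathcal{P}_\alpha = \{N(y,\alpha) : y \in Y\}$ is a clopen partition of $Y$, with $\mathcal{P}_\beta$ refining $\mathcal{P}_\alpha$ whenever $\beta \ge \alpha$. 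Given a neighbornet $U$ refined so that $U(y) = N(y, \alpha(y))$, I would build $D$ by transfinite recursion on $\alpha < \om_1$: at stage $\alpha$, for each $P \in \mathcal{P}_\alpha$ not yet covered and containing some $z$ with $\alpha(z) \le \alpha$, pick one such $z$. Every $y$ is covered by stage $\alpha(y)$ at the latest (since $y$ itself is a valid witness there), and the clopen-partition structure keeps the accumulated set closed discrete. Hereditariness is automatic since subspaces inherit the $\om_1$-metrizable base.

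Next I would drop the compactness assumption. Every Tychonoff $X$ of weight $\le \om_1$ embeds as a subspace of a compact $K$ of weight $\le \om_1$ (take its closure in $I^{\om_1}$). The coordinatewise inclusion lifts to a map $\phi \colon \nabla X^\om \to \nabla K^\om$ which is well-defined and injective (if $f, g \in X^\om$ satisfy $f =^* g$ in $K^\om$, they already satisfy $f =^* g$ in $X^\om$), and continuous by the universal property of the quotient $q_X$. To check $\phi$ is a topological embedding, given an open $W$ in $\nabla X^\om$, I would extend $q_X^{-1}(W)$ to an open saturated set $V$ in $K^\om$ by replacing each box factor $U_n$ (open in $X$) with an open $V_n$ in $K$ satisfying $V_n \cap X = U_n$ and taking the $=^*$-saturation in $K^\om$, then verify $\phi^{-1}(q_K(V)) = W$. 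Hence $\nabla X^\om$ embeds as a subspace of the hereditarily $D$ space $\nabla K^\om$, so is itself hereditarily $D$.

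For the box product clause, suppose $X$ is additionally compact. Each factor of $\square X^\om$ is then compact, so Theorem~\ref{th:boxD_iff_nablaD} applies and gives that $\square X^\om$ is $D$ iff $\nabla X^\om$ is $D$; the latter is hereditarily $D$ by the above, in particular $D$, so $\square X^\om$ is $D$.

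The main obstacle is the first sublemma — extracting hereditary $D$ from $\om_1$-metrizability. The transfinite recursion must defer classes $P \in \mathcal{P}_\alpha$ that currently lack a valid witness (all their points $z$ have $\alpha(z) > \alpha$) until a later stage $\beta \ge \min\{\alpha(z) : z \in P\}$ where the refined partition $\mathcal{P}_\beta$ splits $P$ usefully, and one must verify that once a witness lands in some class $N(y,\beta)$, no further witnesses are placed inside $N(y,\beta)$ or its refinements — this is what keeps the final selector closed discrete. A secondary, essentially bookkeeping concern is confirming that $\phi$ is genuinely a topological embedding in the quotient topology, which amounts to the saturation calculation sketched above.
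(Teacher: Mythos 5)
Your proposal follows the paper's own route exactly: Williams' theorem, the sublemma that $\om_1$-metrizable spaces are hereditarily $D$ (which the paper asserts without further proof), the embedding of $X$ into a compact space of the same weight together with the induced embedding of $\nabla X^\om$ into $\nabla K^\om$, and Theorem~\ref{th:boxD_iff_nablaD} for the box clause. The one point your sketch of the sublemma leaves open --- that the selector is closed discrete even though a class $N(y,\beta)$ may still contain countably many witnesses chosen at \emph{earlier} stages, one per coarser partition class $N(y,\gamma)$ with $\gamma<\beta$ --- does go through, e.g.\ because an $\om_1$-metrizable space is a $P$-space, so that countable residual set is itself closed discrete and $y$ has a smaller basic neighborhood missing all of it except possibly $y$.
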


Naturally we ask if this result holds in ZFC.
\begin{ques}
If $X$ has weight no more than $\omega_1$ then is $\nabla X^\omega$ (really) hereditarily $D$? 
Is at least true in ZFC that if $X$ is compact and $w(X) \le \omega_1$ then $\square X^\omega$ is $D$?
\end{ques}

In this direction we prove in ZFC that $\nabla (\omega_1 +1)^\omega$ is hereditarily $D$. 
(Recall we referred to this fact in the preliminaries.)  
To this end we will use (upgraded to the hereditary version) the next result by Guo and Junnila.

\begin{thr}[\cite{Guo&Junnila}]\label{t:Guo&Junnila}
Suppose $X = \bigcup_{\alpha < \kappa} X_\alpha$, where each $X_\alpha$ is (hereditarily) $D$, and for each $\beta < \kappa$, $\bigcup_{\alpha < \beta} X_\alpha$ is closed. Then $X$ is (hereditarily) $D$.
\end{thr}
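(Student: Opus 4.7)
The plan is a transfinite recursion on $\beta \le \kappa$. Given a neighbornet $U$ on $X$ and abbreviating $Y_\beta := \bigcup_{\alpha<\beta} X_\alpha$, I will build a closed discrete $D_\beta \subseteq X$ with $U(D_\beta) \supseteq Y_\beta$; the required witness for the $D$-property of $X$ is $D := D_\kappa$. Begin with $D_0 = \emptyset$.

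At a successor step $\beta+1$, the critical observation is that $X_\beta \setminus U(D_\beta) = Y_{\beta+1} \setminus U(D_\beta)$ (since, by induction, $U(D_\beta)$ already covers $Y_\beta$), and hence is closed in $X$ as the intersection of the closed set $Y_{\beta+1}$ (closed by hypothesis) with the complement of the open set $U(D_\beta)$. As a closed subspace of the $D$-space $X_\beta$, it is itself $D$; applying its $D$-property to the restricted neighbornet $x \mapsto U(x) \cap (X_\beta \setminus U(D_\beta))$ yields a closed discrete $E_\beta \subseteq X_\beta \setminus U(D_\beta)$ (automatically closed discrete in $X$, since its ambient set is closed in $X$) with $U(E_\beta) \supseteq X_\beta \setminus U(D_\beta)$. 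Setting $D_{\beta+1} := D_\beta \cup E_\beta$, closed discreteness in $X$ is immediate from the separation $D_\beta \subseteq U(D_\beta)$ and $E_\beta \cap U(D_\beta) = \emptyset$: the open set $U(D_\beta)$ cleanly isolates the two parts from each other.

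At a limit $\lambda$, set $D_\lambda := \bigcup_{\beta<\lambda} D_\beta$. For any $x \in X$, one of two cases holds: (a) $x \in U(D_\beta)$ for some $\beta<\lambda$, in which case an isolating neighborhood of $x$ with respect to the closed discrete set $D_\beta$, intersected with $U(D_\beta)$, isolates $x$ in $D_\lambda$ (using that $E_\gamma \cap U(D_\beta) = \emptyset$ for every $\gamma \ge \beta$, since $E_\gamma \subseteq X \setminus U(D_\gamma) \subseteq X \setminus U(D_\beta)$); or (b) $x$ avoids every $U(D_\beta)$, whence $x \notin Y_\lambda$, and the open set $X \setminus Y_\lambda$ -- open \emph{precisely} by the hypothesis that $Y_\lambda$ is closed -- is a neighborhood of $x$ disjoint from $D_\lambda \subseteq Y_\lambda$. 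The main obstacle is case (b) at the limit stages: it is exactly here that the closedness hypothesis on each $\bigcup_{\alpha<\beta} X_\alpha$ is doing essential work, and without it the construction would fail to produce a globally closed discrete set.

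For the hereditary upgrade, given any subspace $Z \subseteq X$, the decomposition $Z = \bigcup_\alpha (Z \cap X_\alpha)$ satisfies the same hypotheses within $Z$: each $Z \cap X_\alpha$ is hereditarily $D$ as a subspace of the hereditarily $D$ space $X_\alpha$, and each partial union $Z \cap Y_\beta$ is closed in $Z$ as the trace of a closed subset of $X$. Applying the non-hereditary conclusion inside $Z$ shows $Z$ is $D$, so $X$ is hereditarily $D$.
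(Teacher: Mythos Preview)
Your proof is correct and is essentially the standard argument for this result. Note, however, that the paper does not actually supply its own proof: Theorem~\ref{t:Guo&Junnila} is simply quoted from \cite{Guo&Junnila} and then applied, so there is no in-paper proof to compare against. Your transfinite construction (adding at each successor stage a closed discrete kernel for the closed ``remainder'' $Y_{\beta+1}\setminus U(D_\beta)$, and using closedness of $Y_\lambda$ to handle points outside $U(D_\lambda)$ at limit stages) is precisely the argument one finds in Guo and Junnila's paper, and your hereditary upgrade via restriction to an arbitrary subspace is the natural one.
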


\begin{exa}\label{ex:om1+1}
The space $\nabla (\omega_1+1)^\omega$ is hereditarily $D$.
\end{exa}
\begin{proof}
For an ordinal $\alpha < \omega_1$, let $X_\alpha = \{ x\in \nabla (\omega_1+1)^\omega: \forall^\infty n\in \omega, \ x(n) \leq \alpha \text{ or } x(n)= \omega_1 \}$. Let's note that each $X_\alpha$ is closed: if $y\notin X_\alpha$, there is an infinite set $N$ with $\alpha < y(n) < \omega_1$, for all $n\in N$. Hence, $\nabla_{n\in N} (\alpha, y(n)] \times \nabla (\omega_1 +1)^{\omega\setminus N}$ is disjoint from $X_\alpha$. 
Now, observe that each $X_\alpha$ is homeomorphic to $\nabla (\alpha \cup \{ *\})^\omega$, where $*$ is an isolated point. By Proposition \ref{th:hpcpt_sc} , $X_\alpha$ is hereditarily $D$, for each $\alpha < \omega_1$. 
Finally, since $\nabla (\omega_1+1)^\omega$ is a $P$-space (countable unions of closed sets are closed), $X_\alpha$ is hereditarily $D$ and $\nabla (\omega_1+1)^\omega = \bigcup_{\alpha< \omega_1} X_\alpha$, Theorem \ref{t:Guo&Junnila} applies. 
\end{proof}

\subsection{Metrizable}\label{ssec:metric}
Our primary objective in this section is to prove the following.
\begin{thr}\label{th:nablametricD} Let $X$ be a metrizable space. 
Then $\nabla X^\om$ is  hereditarily D. If $X$ has weight no more than $\mathfrak{d}$ then $\square X^\om$ is hereditarily D.
\end{thr}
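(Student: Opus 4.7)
My plan is to apply the super-sticky framework of Proposition~\ref{pr:ss_cover_implies_D}, leveraging the $\sigma$-discrete base structure of metrizable spaces given by the Nagata--Smirnov metrization theorem.

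First, fix a compatible bounded metric $d$ on $X$ and a $\sigma$-discrete open base $\mathcal{B} = \bigsqcup_{k \in \omega} \mathcal{B}_k$, where each $\mathcal{B}_k$ is a discrete family of open sets of diameter at most $1/2^k$; we may take $|\mathcal{B}| = w(X)$. If needed, refine so that at each level $k$, distinct elements of $\mathcal{B}_k$ are suitably separated at a scale finer than $1/2^{k-1}$.

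For $\nabla X^\om$, I would fix the reference neighbornet $T([x]) = \nabla_n B(x(n), 1/2^n)$ and, for each suitable sequence $\overline{B} = (B_n)_n$ with $B_n \in \mathcal{B}_{k_n}$ for carefully chosen $k_n \ge n + c$ (some fixed offset), define
$$S(\overline{B}) = \{[y] \in \nabla X^\om : y(n) \in B_n \text{ for almost all } n\}.$$
The plan is to show: (a) the family $\{S(\overline{B})\}$ covers $\nabla X^\om$ --- any $[x]$ has a representative whose coordinates eventually lie in small base elements; (b) each $S(\overline{B})$ is closed discrete in $\nabla X^\om$ because the discreteness of each $\mathcal{B}_{k_n}$ separates distinct members of $S(\overline{B})$ through box neighborhoods built from the $B_n$'s; and (c) each $S(\overline{B})$ is super-sticky below $T$. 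The content of (c) is that if a point $[y]$ satisfies $T([y]) \cap S(\overline{B}) \ne \emptyset$, witnessed by some $[z] \in S(\overline{B})$ with $d(y(n), z(n)) < 1/2^n$ and $z(n) \in B_n$ almost always, then the small diameter of $B_n$ together with the refined uniform separation at level $k_n$ forces $y(n) \in B_n$ almost always; hence $[y] \in S(\overline{B})$. Proposition~\ref{pr:ss_cover_implies_D} then gives that $\nabla X^\om$ is hereditarily $D$.

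For $\square X^\om$ with $w(X) \le \mathfrak{d}$, the complication is that the coordinate-wise radii in box neighborhoods are chosen independently, so no single reference neighbornet captures the whole product the way $T$ does in the nabla case. I would therefore fix a dominating family $\{f_\alpha : \alpha < \mathfrak{d}\}$ in $\om^\om$ and work below each neighbornet $T_\alpha(x) = \square_n B(x(n), 1/2^{f_\alpha(n)})$ separately. The super-sticky sets become indexed by pairs $(\alpha, \overline{B})$ where $\overline{B}$ is adapted to $f_\alpha$, and the hypothesis $w(X) \le \mathfrak{d}$ ensures that the total number of parameters available --- $|\mathcal{B}| \cdot \mathfrak{d} \le \mathfrak{d}$ --- is still enough to furnish a cover below each $T_\alpha$. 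Any neighbornet on $\square X^\om$ is refined by some $T_\alpha$ via the dominating property, so hereditary $D$-ness follows again from Proposition~\ref{pr:ss_cover_implies_D}.

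The main obstacle will be the rigorous verification of (c): controlling the gap between the ball radius $1/2^n$ (or $1/2^{f_\alpha(n)}$) and the uniform separation achievable in the discrete family $\mathcal{B}_{k_n}$. Simple discreteness is only pointwise, not uniform, so an unrefined base will not do --- a ball of radius $1/2^n$ around a point $y(n)$ lying just outside $B_n$ could in principle still meet $B_n$. The fix is a preliminary refinement of the base (essentially choosing $k_n$ enough larger than $n$, plus shrinking the base elements slightly, so that closures of distinct members of $\mathcal{B}_{k_n}$ are pairwise at distance exceeding $1/2^{n-1}$), at which point the super-stickiness condition becomes automatic.
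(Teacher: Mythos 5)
There is a fatal gap at step (b). Your set $S(\overline{B})=\{[y] : y(n)\in B_n \text{ for almost all } n\}$ is exactly the basic open box $\nabla_n B_n$, and such a set is essentially never closed discrete: if $[y]\in S(\overline{B})$ and the points $y(n)$ are not (almost all) isolated in $X$, then for any basic neighborhood $\nabla_n V_n$ of $[y]$ one can pick $z(n)\in V_n\cap B_n\setminus\{y(n)\}$ for every $n$ and obtain $[z]\ne[y]$ in $\nabla_n V_n\cap S(\overline{B})$; for $X=\mathbb{R}$ every $S(\overline{B})$ is a dense-in-itself open box. The discreteness of the family $\mathcal{B}_{k_n}$ separates distinct \emph{members} of that family, not distinct points inside a single $B_n$, so shrinking diameters or increasing the levels $k_n$ cannot repair this. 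The obstruction is structural rather than technical: for any neighbornet $T$ on $\nabla\mathbb{R}^\om$, a set that is super-sticky below $T$ must contain the $T$-saturation of each of its points (all $[y]$ with $T([y])$ meeting the set), and this saturation is never closed discrete when $X$ is non-discrete; so no cover as required by Proposition~\ref{pr:ss_cover_implies_D} exists, and that proposition is simply the wrong tool here (in the paper it is used only for scattered spaces, via topological partial orders). Two smaller problems: the levels $\mathcal{B}_k$ of a Nagata--Smirnov base are discrete but do not individually cover $X$, so claim (a) also fails as stated; and in the box case no single $T_\alpha$ is below a given neighbornet $U$ at \emph{every} point (the function $g_x$ needed at $x$ varies with $x$, and a fixed $f_\alpha$ dominates only some of them), so one cannot reduce to working below one reference neighbornet at a time.

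The paper's proof goes a genuinely different route, and it is worth seeing why. It decomposes a subspace $Y$ of $\square X^\om$ into the fibers $Y_x=q^{-1}(x)\cap Y$ of the quotient map onto $\nabla X^\om$; these are $\sigma$-products of metrizable spaces with the box topology, hence stratifiable and hereditarily $D$, and fewer than $\mathfrak{d}$ of them union up to a closed discrete set (Lemma~\ref{l:hyp3}). It then applies Gruenhage's machinery (Proposition~\ref{PropFibersofU-closedSets}) with the nearly good relation $xRy$ iff $x\in N(y,f)\subseteq U(x)$ for some $f$ in a dominating family $D$ of size $\mathfrak{d}$; the point of the dominating family is precisely that $R^{-1}(y)$ splits into $\mathfrak{d}$-many $U$-close pieces $C(f)$, one per $f\in D$, which is where the varying ``radius functions'' that defeat your single-$T_\alpha$ plan are absorbed. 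If you want to salvage your idea, the place where the $\sigma$-discrete base could plausibly enter is in verifying the extent bound $e(Y_x)\le\mathfrak{d}$, not in manufacturing a super-sticky cover.
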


For the proof we use some machinery due to Gruenhage \cite{gruenhage2006note}.
Let $Y$ be a space. A relation $R$ on $Y$ is
nearly good if $x \in \overline{A}$ implies $xRy$ for some $y \in A$.
For a neighborhood assignment, $U$, on $Y$, and subsets $Y'$ and $D$ of $Y$, we say
$D$ is $U$-sticky mod $R$ on $Y'$ if whenever $x \in Y'$ and $xRy$ for some $y \in D$, then $x \in U[D]$. 
We say more briefly that $D$ is $U$-sticky mod $R$ if $D$ is $U$-sticky mod $R$ on $Y$.
Given a neighborhood assignment $U$ on $Y$,  a subset $Z$ of $Y$ is $U$-close
if whenever $x, x'$ are in  $Z$ we have  $x \in U(x')$ (equivalently, $Z \subseteq U(x)$ for every $x$ in $Z$). 
Recall that the \emph{extent} of $(Y)$ is $e(Y) = \sup \{ |C|: C\subseteq Y \text{ is closed discrete}\}$.

It follows immediately from the definitions:
\begin{lemma}\label{l:gr2.0}
Let $U$ be a neighborhood assignment on $Y$, and $R$ a nearly good
relation. If $D$ is $U$-sticky mod $R$ on $Y'$ , then $\overline{D}\cap Y' \subseteq U[D]$.
\end{lemma}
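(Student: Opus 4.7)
The plan is to unwind the two definitions in play on a single candidate point; the statement is essentially a direct chase through ``nearly good'' and ``$U$-sticky mod $R$ on $Y'$,'' and the proof should take only a few lines.

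First I would pick an arbitrary $x \in \overline{D} \cap Y'$, aiming to show $x \in U[D]$. Since $x$ lies in the closure of $D$ and $R$ is nearly good (applied with $A=D$), the defining property of nearly good relations immediately supplies a witness $y \in D$ with $xRy$.

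Next I would feed this pair into the hypothesis that $D$ is $U$-sticky mod $R$ on $Y'$. The premises of that hypothesis ($x \in Y'$ and $xRy$ for some $y\in D$) are exactly what I have just obtained, so the conclusion gives $x \in U[D]$. Since $x$ was arbitrary, this yields $\overline{D}\cap Y' \subseteq U[D]$.

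There is no real obstacle here: the lemma is a two-line definitional chase, whose only content is to observe that ``nearly good'' produces exactly the kind of related witness that ``$U$-sticky mod $R$'' is designed to consume. The reason it is worth recording is presumably its downstream use, where one wants to upgrade a $U$-sticky-mod-$R$ set into a cover of a closed subspace.
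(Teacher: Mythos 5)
Your proof is correct and matches the paper's intent exactly: the paper itself gives no written argument, stating only that the lemma ``follows immediately from the definitions,'' and your two-step chase (nearly good with $A=D$ produces a witness $y\in D$ with $xRy$; the $U$-sticky mod $R$ on $Y'$ hypothesis then yields $x\in U[D]$) is precisely that immediate deduction.
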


The following result is a direct extension of Proposition 2.3 in \cite{gruenhage2006note}, and the proof is left to the reader. It is needed to prove  Proposition \ref{PropFibersofU-closedSets} which uses elementary submodels as per Gruenhage.  

\begin{prop}
Let $U$ be a neighborhood assignment on $Y$, let $\kappa$ be a cardinal, and let $R$ be a nearly good relation on $Y$. 
Suppose that given any closed discrete $D$ of size $\leq \kappa$ and non-empty closed $F \subseteq Y \setminus U (D)$ such that $D$ is $U$-sticky mod $R$ on $F$, there is a non-empty closed discrete $E \subseteq F$ of size $\leq \kappa$ such that $D \cup E$ is $U$-sticky mod $R$ on $F$. 
Then there is a closed discrete $D'$ in $Y$ with $U (D') = Y$.
\end{prop}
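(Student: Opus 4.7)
The plan is to adapt Gruenhage's proof of Proposition~2.3 in \cite{gruenhage2006note} by replacing countable elementary submodels with submodels of size $\kappa$, so that the cardinal-$\kappa$ hypothesis can be iteratively fed back into itself. Fix a sufficiently large regular $\theta$ and build a continuous $\subseteq$-increasing chain $\langle M_\alpha : \alpha < \mu \rangle$ of elementary submodels of $H(\theta)$, each of size $\kappa$ and each containing $Y, U, R, \kappa$ as parameters, with $Y \subseteq \bigcup_{\alpha < \mu} M_\alpha$.

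By transfinite recursion on $\alpha < \mu$, I would pick closed discrete sets $E_\alpha \subseteq Y \cap M_{\alpha+1}$ of size $\leq \kappa$ as follows. Write $D_\alpha = \bigcup_{\beta < \alpha} E_\beta$ and $D_\alpha^* = D_\alpha \cap M_\alpha$; the latter has size $\leq \kappa$ since $|M_\alpha| = \kappa$. Consider $F_\alpha = \overline{M_\alpha \cap Y} \setminus U(D_\alpha^*)$. The crucial reflection step is to verify that $D_\alpha^*$ is $U$-sticky mod $R$ on $F_\alpha$; the hypothesis then yields a closed discrete $E_\alpha \subseteq F_\alpha$ of size $\leq \kappa$ making $D_\alpha^* \cup E_\alpha$ sticky mod $R$ on $F_\alpha$, and by elementarity such an $E_\alpha$ can be chosen inside $M_{\alpha+1}$. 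If $F_\alpha$ is empty, the construction halts.

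Set $D' = \bigcup_{\alpha < \mu} E_\alpha$. Applying Lemma~\ref{l:gr2.0} at each stage shows $\overline{M_\alpha \cap Y} \subseteq U(D_\alpha^*) \cup U(E_\alpha) \subseteq U(D_{\alpha+1})$, and since $Y = \bigcup_\alpha (M_\alpha \cap Y)$ this gives $U(D') = Y$. For closed discreteness of $D'$, each $y \in Y$ lies in some $M_\alpha$, and the stickiness-on-$F_\beta$ conditions combine with elementarity to produce a neighborhood of $y$ meeting $D'$ in at most one point.

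The main obstacle will be carrying the stickiness-mod-$R$ condition through the recursion: at each stage I must verify that the ``small'' set $D_\alpha^* = D_\alpha \cap M_\alpha$ really is $U$-sticky mod $R$ on the localized closed set $\overline{M_\alpha \cap Y}$, not merely on $M_\alpha \cap Y$. This calls for a careful inductive use of the nearly-good property of $R$ together with the elementarity of $M_\alpha$, ensuring that any $R$-witness to stickiness lying outside $M_\alpha$ is reflected down to a witness inside it. Once this reflection step is in place, the remainder is the direct cardinal-$\kappa$ lift of Gruenhage's original construction, as the paper's authors indicate.
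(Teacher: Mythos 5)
Your proposal puts the elementary--submodel machinery in the wrong place, and two of its steps do not go through. The intended argument (the reason the paper leaves the proof to the reader) is the direct transfinite recursion of Proposition~2.3 of \cite{gruenhage2006note}: set $F_0=Y$, apply the hypothesis (with $D=\emptyset$, which is vacuously sticky) to get a non-empty closed discrete $E_0\subseteq F_0$ of size $\le\kappa$ that is $U$-sticky mod $R$ on $F_0$; recursively put $F_\alpha=Y\setminus U\bigl(\bigcup_{\beta<\alpha}E_\beta\bigr)$ and, while $F_\alpha\neq\emptyset$, obtain $E_\alpha\subseteq F_\alpha$ sticky mod $R$ on $F_\alpha$. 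The $E_\alpha$'s are non-empty and pairwise disjoint, so the recursion halts with some $F_\mu=\emptyset$ --- and \emph{that exhaustion}, not any single step, is what gives $U(D')=Y$ for $D'=\bigcup_{\alpha<\mu}E_\alpha$. Stickiness of each partial union on $F_\lambda$ is automatic because each $E_\beta$ is sticky on $F_\beta\supseteq F_\lambda$, and then Lemma~\ref{l:gr2.0} together with the minimality of the first stage whose $U$-image captures a given point yields closed discreteness. Submodels belong to the \emph{verification} of the hypothesis (as in Proposition~\ref{PropFibersofU-closedSets}), not to deriving the conclusion from it.

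Concretely: first, you claim $\overline{M_\alpha\cap Y}\subseteq U(D_\alpha^*)\cup U(E_\alpha)$ ``by Lemma~\ref{l:gr2.0},'' but that lemma only controls $\overline{D}\cap Y'$ for the closed discrete set $D$ itself; the hypothesis hands you an $E_\alpha$ making $D_\alpha^*\cup E_\alpha$ \emph{sticky} on $F_\alpha$, not one with $F_\alpha\subseteq U(D_\alpha^*\cup E_\alpha)$, so your covering argument for $U(D')=Y$ collapses. Second, the step you yourself call ``the main obstacle'' --- that $D_\alpha^*=D_\alpha\cap M_\alpha$ is $U$-sticky mod $R$ on $\overline{M_\alpha\cap Y}\setminus U(D_\alpha^*)$ --- is never established, and it is genuinely in doubt: a point $x\in M_\alpha$ with $xRy$ for some $y\in E_\beta$, $\beta<\alpha$, is only controlled by the stage-$\beta$ stickiness if $x$ already lies in $\overline{M_\beta\cap Y}$, which need not hold. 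Finally, the one real novelty over Gruenhage --- that the accumulated set $\bigcup_{\beta<\alpha}E_\beta$ may eventually exceed size $\kappa$ and so cannot itself be fed back into the hypothesis --- is exactly the point your recursion would need to confront, and it is resolved not by intersecting with a model but by observing that stickiness of the union on the shrinking closed sets is inherited from the individual stages.
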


\begin{prop}\label{PropFibersofU-closedSets}
Let $\kappa$ be a cardinal.
Suppose $Y$ is a space which can be written as a disjoint union $Y= \bigcup_{i \in I} Y_i$, such that (1) each $Y_i$ is a closed $D$-subspace, (2) each $e(Y_i)\leq \kappa$, and (3) if $D_i \subseteq Y_i$ is closed discrete, for $i\in I$, and $J \subseteq I$ has size $< \kappa$, then $\bigcup_{i\in J} D_i$ is closed discrete. 
 
Let $U$ be a neighborhood assignment on $Y$ for which  there is a nearly good $R$ on $Y$ such that for any $y \in Y$, $R^{-1}(y)$ is the union of $\kappa$-many $U$-close sets.
 Then there is a closed discrete $D$ such that $U(D) = Y$.
\end{prop}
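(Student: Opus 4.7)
The plan is to apply the preceding Proposition, so the task is to verify its hypothesis: given a closed discrete $D$ of size $\leq \kappa$ and a non-empty closed $F \subseteq Y \setminus U(D)$ with $D$ being $U$-sticky mod $R$ on $F$, we must produce a non-empty closed discrete $E \subseteq F$ of size $\leq \kappa$ with $D \cup E$ being $U$-sticky mod $R$ on $F$.

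First I would exploit the $U$-close structure to reduce the stickiness requirement to a combinatorial one. Since $F \cap U(D) = \emptyset$ and $D$ already takes care of itself, we only need: for every $y \in E$ and $x \in R^{-1}(y) \cap F$, $x \in U(D \cup E)$. Writing $R^{-1}(y) = \bigcup_{\alpha < \kappa} C_{y, \alpha}$ with each $C_{y,\alpha}$ $U$-close, the key observation is that if $E \cap C_{y,\alpha}$ contains some $b$, then $C_{y,\alpha} \subseteq U(b) \subseteq U(E)$. Hence it suffices to build $E$ satisfying the closure condition: for every $y \in E$ and $\alpha < \kappa$ such that $C_{y,\alpha} \cap F \not\subseteq U(D)$, $E$ meets $C_{y,\alpha}$.

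To construct $E$, I would start by fixing some $i_0 \in I$ with $F \cap Y_{i_0} \neq \emptyset$. Since $F \cap Y_{i_0}$ is closed in the $D$-space $Y_{i_0}$, it is itself $D$, so the $D$-property yields a closed discrete $E_0 \subseteq F \cap Y_{i_0}$ with $U(E_0) \supseteq F \cap Y_{i_0}$ and $|E_0| \leq e(Y_{i_0}) \leq \kappa$. I would then iterate for $\kappa$ many stages: at each successor stage, for every existing $y \in E_\sigma$ and every $\alpha < \kappa$ with $C_{y,\alpha} \cap F \not\subseteq U(D)$ but $E_\sigma \cap C_{y,\alpha} = \emptyset$, select a witness $b_{y,\alpha} \in C_{y,\alpha} \cap F \setminus U(D)$; for each $Y_i$ newly hit by these witnesses, replace the $Y_i$-slice by a closed discrete subset of $F \cap Y_i$ provided by the $D$-property which $U$-covers those witnesses. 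At stage $\kappa$ set $E = \bigcup_{\sigma < \kappa} E_\sigma$. Using near-goodness of $R$ (via Lemma~\ref{l:gr2.0}) and the $U$-close reduction above, $D \cup E$ will be $U$-sticky mod $R$ on $F$, and size bookkeeping gives $|E| \leq \kappa$.

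The main obstacle is verifying that $E$ is closed discrete in $Y$. The slices $E \cap Y_i$ are closed discrete in $Y_i$ by construction, but condition (3) glues only strictly fewer than $\kappa$ such slices. I would try to overcome this by running the iteration inside a fixed elementary submodel $M \prec H(\theta)$ of size $\kappa$ containing the parameters $Y, U, R, (Y_i)_{i \in I}, D, F$, and the function $(y,\alpha) \mapsto C_{y,\alpha}$, and only ever working with $Y_i$'s whose index $i$ lies in some bounded initial segment of $I \cap M$; the combination of elementarity, reflection of the $U$-close decomposition inside $M$, and a cofinality argument should allow $E$ to be realized as a union over a family of indices of size $< \kappa$, at which point condition (3) delivers closed discreteness.
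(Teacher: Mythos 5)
Your overall strategy (reduce to the preceding proposition, exploit $U$-closeness of the pieces of $R^{-1}(y)$, run a length-$\kappa$ recursion that replaces chosen points by closed discrete $U$-covers of their slices, all inside an elementary submodel of size $\kappa$) is the same as the paper's, but two steps do not go through as written. First, there is a mismatch between your reduction and your construction. The reduction needs $E$ to actually \emph{meet} each relevant $C_{y,\alpha}$, since it is membership $b\in E\cap C_{y,\alpha}$ that yields $C_{y,\alpha}\subseteq U(b)\subseteq U(E)$. But your construction never puts the witness $b_{y,\alpha}$ into $E$; it only arranges $b_{y,\alpha}\in U(E)$ by inserting a closed discrete $U$-cover of its slice. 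That handles the one chosen witness but says nothing about the other points of $C_{y,\alpha}\cap F$, which may lie in entirely different slices, and your bookkeeping never returns to the pair $(y,\alpha)$ once its witness has been covered. The paper avoids this by recomputing the set of currently violating points at every stage and choosing at stage $\alpha$ a point $e_\alpha$ lying in the $<_M$-least not-yet-used $U$-close set $G$; since $e_\alpha\in G$ forces $G\subseteq U(e_\alpha)$, using $G$ once covers all of it, and the well-ordering of $M$ in type $\kappa$ guarantees every relevant $G\in M$ is eventually used.

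Second, and more seriously, your plan for the closed discreteness of $E$ cannot work. The recursion has length $\kappa$ and may visit a new slice $Y_i$ at essentially every stage --- in the intended application to $\square X^\omega$, where the $Y_i$ are the fibers of $q$, it must --- so there is no cofinality argument confining $E$ to fewer than $\kappa$ slices, and $I$ carries no order that would make ``bounded initial segment of $I\cap M$'' meaningful. Hypothesis (3) therefore does not apply to the full union. The paper uses (3) only in the case where the recursion halts at some stage $\alpha<\kappa$ because no violating points remain; in the length-$\kappa$ case it derives closed discreteness \emph{from} stickiness instead: once $D\cup E$ is shown to be $U$-sticky mod $R$ on $F$, Lemma~\ref{l:gr2.0} places $\overline{E}\cap F$ inside $U(D\cup E)$, and $E$ is relatively closed discrete in $U(E)$ by construction (each added block $U$-covers its entire slice, so later stages are forced into fresh slices). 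Without some mechanism of this kind your final $E$ may simply fail to be closed discrete, which is the crux of the proposition.
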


\begin{proof}
By the preceding proposition, we need only show that if $D$ has size $\leq \kappa$, is closed discrete, and $U$-sticky mod $R$ on some non-empty closed $F \subseteq Y \setminus U (D)$, then there is a non-empty closed discrete $E \subseteq F$ of size $\leq \kappa$ such that $D \cup E$ is $U$-sticky mod $R$ on $F$. For $i\in I$, let $F_i = F\cap Y_i$.

For $y \in Y$, let $R^{-1} (y) \setminus U (y) = \bigcup_{\alpha\in e(Y)} G_\alpha (y)$, where each $G_\alpha (y)$ is $U$-close.
Put all relevant objects in an elementary submodel $M$ of size $\kappa$ ($R,Y, U, D \in M$ and $D \subseteq M$). 
Let $<_M$ well-order $M$ in type $\kappa$. 
For each $i\in I$, choose a closed discrete set $E_0 \subseteq F_{i_0} \cap M$ of size $\leq \kappa$ (hypothesis (2)) such that $F_{i_0} \subseteq U(E_0)$ (hypothesis (1)). Suppose defined $E_\beta$, for $\beta < \alpha$, and look at
$$F^\alpha = \{x \in F \setminus U (\bigcup_{\beta < \alpha} E_\beta) : xRy \text{ for some } y \in D \cup \bigcup_{\beta < \alpha} E_\beta \}.$$

If $x \in F^\alpha$, then $x\in G_{\gamma} (y) \subseteq U(x)$ for some $y\in D \cup \bigcup_{\beta < \alpha} E_\beta$, and some $\gamma$. Moreover, if $x \in F^\alpha \cap M$,  then both $G_{\gamma} (y)$ and $U(x)$ are elements of $M$ since $U, R, y\in M$. 
Choose $e_\alpha \in F^\alpha \cap M$ such that the corresponding $G_{\gamma_{\alpha}} (y_{\alpha})$ is $<_M$-least possible not used so far. Note that $e_\alpha$ is in $Y_{i_\alpha}$, for some $i_\alpha \in I$. Choose a closed discrete set $E_\alpha \subseteq F_{i_\alpha} \cap M$ of size $\leq \kappa$ such that $F_{i_\alpha} \subseteq U(E_\alpha)$.
\\
If $F^\alpha = \emptyset$ for any $\alpha < \kappa$, then $D \cup \bigcup_{\beta < \alpha} E_\beta$ is closed discrete by hypothesis (3), and it is $U$-sticky mod $R$ relative to $F$, this would conclude the proof.
Otherwise, assume that $F^\alpha  \neq \emptyset$ for all $\alpha < \kappa$. Define $E = D \cup \bigcup_{\alpha < \kappa} E_\alpha$. 
We show that $D \cup E$ is closed discrete, and $U$-sticky mod $R$ on $F$. 
Clearly $E$ is relatively closed discrete in $U(E)$ by construction, so if we prove $D \cup E$ is $U$-sticky mod $R$ on $F$, then Lemma~\ref{l:gr2.0} applies, and as a consequence $E$ is closed discrete in $F$. 
To this end, suppose $x \in F \setminus U(D \cup E)$ and $xR y$ for some  $y_0 \in D \cup E$.
Then for all sufficiently large $\alpha$, we have $x \in F^\alpha$.
Let $\alpha_0$ be such that $x\in G_{\alpha_0} (y_0)$, and note that $G_{\alpha_0}(y_0) \in M$. Since the $U(e_\alpha)$'s always contain the $<_M$-least $G_\alpha(y)$, at some step $\gamma$ it was picked an $e_\gamma$ such that $U(e_\gamma) \supseteq G_{\alpha_0} (y_0)$ which puts $x \in U (E_{\alpha_0})$, contradiction.
\end{proof}

The hypothesis (3) of Proposition~\ref{PropFibersofU-closedSets}  
is naturally  satisfied hereditarily by box and nabla products of first countable spaces, with $\kappa=\mathfrak{d}$.

\begin{lemma}\label{l:hyp3} Let $X$ be a first countable space. Let $q$ be the quotient map of $\square=\square X^\om$ to $\nabla=\nabla X^\om$. Let $\kappa=\mathfrak{d}$.

(a) If $Y$ is a subspace of $\nabla$ then $Y=\bigcup_{x \in Y} \{x\}$ satisfies (3) of Proposition~\ref{PropFibersofU-closedSets}.

(b) If $Y$ is a subspace of $\square$ then $Y=\bigcup_{x \in \nabla} \left(q^{-1}(x) \cap Y\right)$ satisfies (3) of Proposition~\ref{PropFibersofU-closedSets}.
\end{lemma}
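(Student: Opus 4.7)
My plan is to derive both (a) and (b) from a single auxiliary fact: when $X$ is first countable (and $T_1$), every subset of $\nabla = \nabla X^\om$ of size strictly less than $\mathfrak{d}$ is closed discrete. This is already tacitly used earlier in the paper (``\emph{every subset of $\nabla$ of size $<\mathfrak{d}$ is closed}''), so the real task is to record it and then make the reductions.

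To prove the auxiliary fact, fix $[f] \in \nabla$ and $A \subseteq \nabla$ with $|A| < \mathfrak{d}$. Using first countability, pick a decreasing local base $\{V_n(k) : k \in \om\}$ at each $f(n)$. For every $[g] \in A \setminus \{[f]\}$ the set $A_g = \{n : g(n) \ne f(n)\}$ is infinite; enumerate it as $a^g_0 < a^g_1 < \dots$ and define $h^g \in \baire$ by letting $h^g(m)$ be the least $k$ with $g(a^g_m) \notin V_{a^g_m}(k)$. Since $\mathfrak{d} = \mathop{cof} \baire$ and $|A| < \mathfrak{d}$, I can choose $\phi \in \baire$ with $\phi \not\le^* h^g$ for every such $g$; replacing $\phi$ by its monotone majorization $h(n) = \max_{m \le n} \phi(m)$ preserves the property that $\{n : h(n) > h^g(n)\}$ is infinite for every $g$. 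The box $U = \nabla_n V_n(h(n))$ is a basic neighborhood of $[f]$. For any $[g] \in A \setminus \{[f]\}$, monotonicity of $h$ and $a^g_m \ge m$ give $h(a^g_m) \ge h(m) > h^g(m)$ for infinitely many $m$, and since $\{V_n(k)\}_k$ is decreasing this forces $g(a^g_m) \notin V_{a^g_m}(h(a^g_m))$ for infinitely many $m$, so $[g] \notin U$. Hence $U \cap A \subseteq \{[f]\}$, proving $A$ is closed discrete.

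Granting this, part (a) is immediate: any $\bigcup_{i \in J} D_i$ with $|J| < \mathfrak{d}$ and $D_i \subseteq \{y_i\}$ is a subset of $\nabla$ of size $<\mathfrak{d}$, hence closed discrete in $\nabla$ by the auxiliary fact, hence closed discrete in $Y$. For part (b), set $D = \bigcup_{i \in J} D_i$ and note $q(D) \subseteq \{x_i : i \in J\}$, a set of size $<\mathfrak{d}$ in $\nabla$, so closed discrete there by the auxiliary fact. Given $z \in Y$, if $q(z) \notin \{x_i : i \in J\}$ pick a basic neighborhood $V$ of $q(z)$ in $\nabla$ disjoint from $\{x_i : i \in J\}$, and $q^{-1}(V) \cap Y$ separates $z$ from $D$; if $q(z) = x_{i_0}$ with $i_0 \in J$ pick $V$ with $V \cap \{x_i : i \in J\} = \{x_{i_0}\}$, so $q^{-1}(V) \cap D \subseteq D_{i_0}$, and the closed discreteness of $D_{i_0}$ in $Y$ lets me shrink further, handling both discreteness of $D$ at $z$ and (if $z \notin D$) its closedness.

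The only substantive step is the auxiliary fact, and the subtlety there is that the bare statement ``$\phi$ is not $\le^*$-dominated by any $h^g$'' supplies infinitely many $n$ with $\phi(n) > h^g(n)$ but does not automatically place these $n$ inside $A_g$, which is what is needed to conclude $[g] \notin U$. The remedy --- indexing $h^g$ by the increasing enumeration of $A_g$ and then transferring the inequality back to $A_g$ along $a^g_m \ge m$ using monotonicity of $h$ --- is the single delicate point; everything else is a routine separation argument using the closed discreteness of $\{x_i : i \in J\}$ in $\nabla$ delivered by part (a).
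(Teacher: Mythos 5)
Your proposal is correct and follows essentially the same route as the paper: both parts are reduced to the fact that subsets of $\nabla$ of size $<\mathfrak{d}$ are closed discrete, with (b) obtained by pulling back discreteness of $q(D)$ along $q$ and then using the closed discreteness of the individual $D_{i}$ in the fibers. The only difference is that you prove this auxiliary fact directly (correctly handling the re-indexing along the increasing enumeration of $A_g$), whereas the paper simply cites Proposition 5.2.4 of Roitman's work.
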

\begin{proof}
Claim (a) is immediate from the fact that any subset of size $<\mathfrak{d}$ of a nabla product of first countable spaces is closed and discrete (see Proposition 5.2.4 in \cite{roitman2011paracompactness}). 

Towards (b),  suppose $D_x$ is a closed discrete subset of $q^{-1}(x) \cap Y$, for $x$ in $J \subseteq \nabla$, where $|J| < \mathfrak{d}$. 
Then noting, as above, that $J$ is closed and discrete in $\nabla$, it is easy to combine, using continuity of $q$,  witnesses of discreteness of $J$ and each $D_x$, to see that $\bigcup_{x \in J} D_x$ is closed discrete in $Y$, as required for (3) of Proposition~\ref{PropFibersofU-closedSets}.
\end{proof}

Now we prove Theorem~\ref{th:nablametricD}.

\begin{proof}
Let $X$ be a metrizable space, say with compatible metric $d$. Write  $\square$ for $\square X^\om$ and $\nabla$ for $\nabla X^\om$. We prove the result for $\square$ (where we additionally assume that the weight of $X$ is $\le \mathfrak{d}$), then at the end we will indicate the small changes needed to establish the claim for $\nabla$ (where there is no weight restriction).

Take any subspace $Y$ of $\square$. To show $Y$ is hereditarily D we will apply Proposition~\ref{PropFibersofU-closedSets}. 
To do so we first verify that $Y$ satisfies the hypothesis on the space, and then show that for every neighbornet $U_Y$ on $Y$ there is a nearly good relation, $R_Y$ as specified in Proposition~\ref{PropFibersofU-closedSets}.

Recall that we denote by $q$ the quotient map from $\square$ to $\nabla$.
The fibres, $q^{-1}(x)$, of $q$, as the $\sigma$-product of metrizable spaces with the box product topology, are stratifiable (van Dowuen~\cite{vD_not_normal}) and hence hereditarily D.
In particular,  $Y$ is the union of the closed sets, $Y_x=q^{-1}(x) \cap Y$, which are D, as $x$ runs over $\nabla$.
Let $\kappa=\mathfrak{d}$. 
Since $X$ has weight no more than $\mathfrak{d}$, 
each $Y_x$  has net weight $\le \mathfrak{d}$, and so $e(Y_x) \le \mathfrak{d}$. 
Combined with Lemma~\ref{l:hyp3}, we see $Y$ satisfies the topological hypotheses (1), (2) and~(3) of Proposition~\ref{PropFibersofU-closedSets}.

Take any neighbornet $U_Y$ on $Y$.
Pick a neighbornet $U$ on $\square$ so that for every $y$ in $Y$ we have $U_Y(y)=U(y) \cap Y$. 
Suppose we have  found a nearly good $R$ on $\square$  such that for any $y \in \square$, $R^{-1} (y) \setminus U (y)$ is the union of $\kappa$-many $U$-close sets. 
Then $R_Y$, which is $R$ restricted to $Y$, witnesses the same property for $U_Y$.
So to complete the argument it suffices to find $R$ for a neighbornet $U$ on $\square$.

Each $x$ in $\square$ has basic neighborhoods, $N(x,f)=\{y : \forall n \ d(x(n),y(n))< 2^{-f(n)}\}$. 
Fix a dominating set, $D$, in $(\om^\om, \le)$, of minimal size, $\mathfrak{d}$, closed under finite modifications and such that if $f$ is in $D$ then so is $f+1$ (where $(f+1)(n)=f(n)+1$). 
Let $\mathcal{B}_{x} = \{ N(x,f) : f \in D\}$.
This is a neighborhood base for $x$.

Let $U$ be a neighbornet in $\square$. Define a relation $R$ on $\square$ by $xRy$ if and only if there is an $f$ in $D$ such that $x \in N(y,f) \subseteq U(x)$. 
We check $R$ is nearly good. Take any subset $A$ of $\square$ and $x$ in $\overline{A}$. Pick $g$ in $D$ so that $x \in N(x,g) \subseteq U(x)$. Let $f=g+1$, and note $f$ is in $D$. Take any $y \in A \cap N(x,f)$.
Then $y$ is in $A$, and $x \in N(y,f) \subseteq N(x,g) \subseteq U(x)$ (because the inclusions hold coordinatewise, as $d$ is symmetric and satisfies the triangle inequality).
In other words, $xRy$, as required.
Finally, for each $f$ in $D$, let $C(f) = \{x : x \in N(y,f)  \subseteq U(x)\}$. Then $C(f)$ is $U$-close, and
$R^{-1} (y) = \bigcup \{C(f) : f \in D\}$.

\medskip

For $\nabla$, the argument is very similar, and a little simpler. 
Take any subspace $Y$ of $\nabla$. 
For $Y_x$ just take $\{x\}$, as $x$ runs through $Y$. 
Clearly the extent restriction on the $Y_x$'s is satisfied (without limiting the weight of $X$). 
Now Proposition~\ref{PropFibersofU-closedSets} applies to $Y$, and we see that $\nabla$ is hereditarily $D$. 
Basic neighborhoods, $N(x,f)$, in $\nabla$ are the natural mod finite modifications of those in $\square$, so define the relation $R$ in the same way as above ($xRy$ if and only if there is an $f$ in $D$ such that $x \in N(y,f) \subseteq U(x)$)  and repeat the argument (mod finite on coordinates) to see it has the requisite properties.
\end{proof}

When our space is first countable  and has small weight - in particular, of weight no more than $\omega_1$ - we can adapt the argument for metrizable spaces as follows.

\begin{thr}
Let $X$ be a first countable space with  weight no more than $\mathfrak{d}$ and strictly less than $\aleph_\om$. 
Then $\nabla X^\om$ is hereditarily $D$.
\end{thr}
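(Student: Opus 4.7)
The plan is to apply Proposition~\ref{PropFibersofU-closedSets} with $\kappa=\mathfrak{d}$, in direct analogy with the nabla half of the proof of Theorem~\ref{th:nablametricD}. Given any subspace $Y$ of $\nabla=\nabla X^\om$, decompose $Y=\bigcup_{\bar{x}\in Y}\{\bar{x}\}$; hypotheses (1) and (2) of the proposition are immediate for singletons, and hypothesis (3) is delivered by Lemma~\ref{l:hyp3}(a), which requires only the first countability of $X$. The whole task then reduces to finding, for any neighbornet $U$ on $Y$, a nearly good relation $R$ for which every $R^{-1}(\bar{y})\setminus U(\bar{y})$ is a union of $\mathfrak{d}$-many $U$-close sets.

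To construct $R$, fix a base $\mathcal{B}$ of $X$ of size $\kappa:=w(X)$ (with $\kappa\le\mathfrak{d}$ and $\kappa<\aleph_\om$), a countable decreasing local base $\{V_n(y):n<\om\}\subseteq\mathcal{B}$ at each $y\in X$, and a dominating family $D\subseteq\om^\om$ of size $\mathfrak{d}$, closed under finite modifications. Setting $N(\bar{y},f)=\nabla_n V_{f(n)}(y(n))$, the family $\{N(\bar{y},f):f\in D\}$ is a local base at $\bar{y}$ of cardinality $\mathfrak{d}$. Mirroring the metric relation, declare $\bar{x}R\bar{y}$ iff there is $f\in D$ with $\bar{x}\in N(\bar{y},f)\subseteq U(\bar{x})$. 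The decomposition $R^{-1}(\bar{y})=\bigcup_{f\in D}C(\bar{y},f)$, with $C(\bar{y},f)=\{\bar{x}:\bar{x}\in N(\bar{y},f)\subseteq U(\bar{x})\}$ $U$-close, transfers verbatim from the metric proof; so the $U$-close-decomposition half of the hypothesis of Proposition~\ref{PropFibersofU-closedSets} comes for free.

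The main obstacle is verifying that $R$ is nearly good: given $\bar{x}\in\overline{A}$, one must find $\bar{y}\in A$ and $f\in D$ with $\bar{x}\in N(\bar{y},f)\subseteq U(\bar{x})$. In the metric proof this step is the \emph{only} essential use of the triangle inequality and symmetry of the metric (one picks $\bar{y}\in A\cap N(\bar{x},g+1)$ and observes $\bar{x}\in N(\bar{y},g+1)\subseteq N(\bar{x},g)$); in a general first countable space there is no such symmetry, and $\bar{x}$ need not lie in any local-base neighborhood of an arbitrary $\bar{y}\in A$ close to $\bar{x}$ (as already in the Sorgenfrey line, all local-base neighborhoods of $\bar{y}$ lie on one side of $\bar{y}$). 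My plan to repair this is to enlarge the witnesses in $R$: in addition to the pure local-base neighborhoods, permit basic open sets $\nabla_n W_n$ where on a finite exceptional coordinate set $F$ the set $W_n\in\mathcal{B}$ is chosen to contain both $x(n)$ and $y(n)$, and $W_n=V_{f(n)}(y(n))$ otherwise. Because nabla identifies elements mod finite, a finite coordinate-by-coordinate repair is all that is ever needed to fit $\bar{x}$ inside a basic neighborhood of $\bar{y}$.

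The hypothesis $w(X)<\aleph_\om$ is exactly what keeps this enlarged witness family small enough: the parameter space $\{(f,F,\sigma):f\in D,\ F\in[\om]^{<\om},\ \sigma:F\to\mathcal{B}\}$ has size $\mathfrak{d}\cdot\kappa^{<\om}=\mathfrak{d}$ (using $\kappa\le\mathfrak{d}$ and that $\kappa^{<\om}=\kappa$ for $\kappa$ infinite, with $\kappa<\aleph_\om$ keeping related cardinal arithmetic of the form $\kappa^{\aleph_0}=\max(\kappa,\mathfrak{c})$ under control), and the $U$-closeness of each new piece $C(\bar{y},f,F,\sigma)$ follows exactly as before: any two of its elements both live in the same basic open witness contained in each of their $U$-neighborhoods. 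Once near-goodness is established by this mod-finite repair, Proposition~\ref{PropFibersofU-closedSets} delivers a closed discrete $D\subseteq Y$ with $U(D)=Y$, and since $Y$ was an arbitrary subspace of $\nabla X^\om$ we conclude that $\nabla X^\om$ is hereditarily $D$.
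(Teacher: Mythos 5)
You set up the application of Proposition~\ref{PropFibersofU-closedSets} exactly as the paper does (singleton partition, $\kappa=\mathfrak{d}$, Lemma~\ref{l:hyp3}(a)), and you correctly locate the obstacle: near-goodness of $R$ in the metric proof rests on the symmetry and triangle inequality of $d$, which a general first countable space lacks. But your repair does not work. The asymmetry is not a finite-coordinate phenomenon: if $\bar{x}\in\overline{A}$ and $\bar{y}\in A$ lies in a small basic neighborhood of $\bar{x}$, then on all but finitely many coordinates $y(n)$ is close to $x(n)$, yet this need not place $x(n)$ inside \emph{any} local-base neighborhood $V_k(y(n))$ of $y(n)$ on \emph{any} of those coordinates --- your own Sorgenfrey example does this on every coordinate simultaneously. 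So a finite exceptional set $F$ cannot fit $\bar{x}$ into a basic neighborhood built from the local bases at the $y(n)$'s; you would have to replace $V_{f(n)}(y(n))$ by an arbitrary member of $\mathcal{B}$ on infinitely many, possibly all, coordinates. That enlarges the witness family at $\bar{y}$ to (essentially) $\mathcal{B}^\om$, of size $w(X)^{\aleph_0}$, which can exceed $\mathfrak{d}$ (consistently $\mathfrak{d}=\om_1<\mathfrak{c}$ while $w(X)=\om_1$ gives $\om_1^{\aleph_0}\ge\mathfrak{c}>\mathfrak{d}$); your cardinal arithmetic aside does not help, since $\max(\kappa,\mathfrak{c})$ can likewise exceed $\mathfrak{d}$.

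The missing idea is to restore symmetry via a \emph{uniformity} rather than by patching local bases. The paper embeds $X$ into $I^{\mu}$, $\mu=w(X)$, takes the compatible uniformity on $I^\mu$ whose basic entourages are indexed by $[\mu]^{<\om}$, and observes that the induced uniformity on $\square X^\om$ (and hence on $\nabla X^\om$) has a base indexed by $([\mu]^{<\om})^\om$. Entourages are symmetric and compose, so the metric arguments for near-goodness and for decomposing $R^{-1}(\bar{y})$ into $U$-close sets transfer verbatim with $N(\bar{y},f)$ replaced by $B[\bar{y}]$ for $B$ in the base. The hypothesis $w(X)<\aleph_\om$ enters precisely here, and not through $\kappa^{<\om}$ or $\kappa^{\aleph_0}$: by the Tukey equivalence of $([\aleph_n]^{<\om})^\om$ with $\om^\om\times[\aleph_n]^{<\om}$ one gets $\mathop{cof}(([\mu]^{<\om})^\om)=\max(\mathfrak{d},\mu)=\mathfrak{d}$, which is what bounds the number of witnesses per point by $\mathfrak{d}$.
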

\begin{proof} 
We will apply Proposition~\ref{PropFibersofU-closedSets} to $\nabla X^\om$ with $\kappa=\mathfrak{d}$ and the partition all the singletons. From Lemma~\ref{l:hyp3} the hypotheses are clearly satisfied. 
 We show that $uw(\nabla X^\om) \le \mathfrak{d}$, and so $\nabla X^\om$ has a $\mathfrak{d}$-point network.  
As $X$ is a subspace of $Y=I^{\mu}$, where $\mu$ is the weight of $X$, we see $\nabla X^\om$ is a subspace of $\nabla Y^\om$.
So it suffices to find a base for a compatible uniformity of $\nabla Y^\om$ of size no more than $\mathfrak{d}$.  
Further, the standard quotient map of $\square Y^\om$ to $\nabla Y^\om$ carries compatible uniformities, and their bases, to compatible uniformities, and their bases. Thus we have reduced our problem to showing that $\square Y^\om$ has a compatible uniformity with a base of size no more than $\mathfrak{d}$.

A basis for a compatible uniformity, $\mathcal{D},$ on $Y$ is given by  $D(F)=\{(y,z) \in Y^2 : |z(\alpha)-y(\alpha)| < 2^{-|F|} \ \forall \, \alpha \in F\}$, where  $F \in [\mu]^{<\om}$.
Note that $\mathop{cof}{\mathcal{D}} \le \mathop{cof}{[\mu]^{<\om}}$. 
A basis for a compatible uniformity on $\square Y^\om$ is given simply by taking countable products from $\mathcal{D}$. 
Hence this compatible uniformity on $\square Y^\om$ has a base of size no more than the cofinality of $([\mu]^{<\om})^\om$. 
However, from \cite{gartside2016tukey} we know that for every $n$ in $\om$ we have that $([\aleph_n]^{<\om})^\om$ is Tukey equivalent to $\om^\om \times [\aleph_n]^{<\om}$, and so its cofinality is $\max( \mathfrak{d},\aleph_n)$. 
By hypothesis, $\mu$ is $\le \mathfrak{d}$ and $<\aleph_\om$, so $\mathop{cof}(([\mu]^{<\om})^\om)=\max( \mathfrak{d},\mu)=\mathfrak{d}$, as required. 
\end{proof}

\bibliographystyle{plain}
\bibliography{biblio}

\end{document}